\tiny\color{gray},
\theoremstyle{plain}
\newtheorem{theorem}                {Theorem}      [section]
\newtheorem{proposition}  [theorem]  {Proposition}
\newtheorem{corollary}    [theorem]  {Corollary}
\newtheorem{lemma}        [theorem]  {Lemma}
\theoremstyle{definition}
\newtheorem{remark}       [theorem]  {Remark}
\numberwithin{equation}{section}
\def \R{{\mathbb R}}
\def \s{{\mathbb S}}
\def \link{~}
\begin{document}

\title[]{On the stability of the equator map for higher order energy functionals}

\author{A.~Fardoun}
\address{Universit\'e de Bretagne Occidentale\\
D\'epartement de Math\'ematiques\\
6, Av. Victor Le Gorgeu\\
B.P. 809 \\
29285 Brest Cedex, France}
\email{ali.fardoun@univ-brest.fr}

\author{S.~Montaldo}
\address{Universit\`a degli Studi di Cagliari\\
Dipartimento di Matematica e Informatica\\
Via Ospedale 72\\
09124 Cagliari, Italia}
\email{montaldo@unica.it}

\author{A.~Ratto}
\address{Universit\`a degli Studi di Cagliari\\
Dipartimento di Matematica e Informatica\\
Via Ospedale 72\\
09124 Cagliari, Italia}
\email{rattoa@unica.it}

\begin{abstract} 
Let $B^n\subset \R^{n}$ and $\s^n\subset \R^{n+1}$ denote the Euclidean $n$-dimensional unit ball and sphere respectively. The \textit{extrinsic $k$-energy functional} is defined on the Sobolev space $W^{k,2}\left (B^n,\s^n \right )$ as follows: $E_{k}^{{\rm ext}}(u)=\int_{B^n}|\Delta^s u|^2\,dx$ when $k=2s$, and $E_{k}^{{\rm ext}}(u)=\int_{B^n}|\nabla \Delta^s u|^2\,dx$ when $k=2s+1$. These energy functionals are a natural higher order version of the classical extrinsic bienergy, also called Hessian energy. The equator map $u^*: B^n \to \s^n$, defined by $u^*(x)=(x/|x|,0)$, is a critical point of $E_{k}^{{\rm ext}}(u)$ provided that $n \geq 2k+1$. The main aim of this paper is to establish necessary and sufficient conditions on $k$ and $n$ under which $u^*: B^n \to \s^n$ is minimizing or unstable for the extrinsic $k$-energy. 
\end{abstract}

\subjclass[2000]{Primary: 58E20; Secondary: 35J48.}

\keywords{Harmonic and polyharmonic maps, weak solutions, stability, energy minimizing maps}

\thanks{Work supported by Fondazione di Sardegna (Project GESTA) and Regione Autonoma della Sardegna (Project KASBA)}

\maketitle

\section{Introduction and statement of the results}\label{intro}
Let $B^n$ and $\s^n$ denote the Euclidean $n$-dimensional unit ball and sphere respectively, and let $k$ be a positive integer. We shall work in the Sobolev spaces

\begin{equation*}\label{Sobolev-space}
W^{k,2}\left ( B^n,\s^n \right )=\left \{ u \in W^{k,2}\left ( B^n,\R^{n+1} \right )\,\,: \,\, u(x)=\left (u^1(x),\ldots,u^{n+1}(x)\right ) \in \s^n \,\,{\rm a.e.} \right \} \,\,.
\end{equation*}
The \textit{extrinsic $k$-energy functional} $E_{k}^{{\rm ext}}(u)$ is defined on $W^{k,2}\left (B^n,\s^n \right )$ as follows:
\begin{equation}\label{2s-energia-ext}
    E_{k}^{{\rm ext}}(u)=\int_{B^n}\,\left | \Delta^s u \right |^2\,dx \qquad \,\,\,\,\quad {\rm when}\;k=2s\,;
\end{equation} 
\begin{equation}\label{2s+1-energia-ext}
   \qquad E_{k}^{{\rm ext}}(u)=\int_{B^n}\,\left |\nabla \Delta^s u \right |^2\,dx \qquad \quad{\rm when}\; k=2s+1\,.
\end{equation}

We say that $u \in W^{k,2}\left (B^n, \s^n \right)$ is an \textit{extrinsic (weakly) $k$-harmonic map} if 

\[
\left .\frac{d}{dt} E_k^{\rm ext} (u_t)\right |_{t=0} = 0 
\]
for all variations 
\[
u_t = \Pi(u + t\phi )\,,
\] where $\phi \in  C_0^{\infty} \left(B^n, \R^{n+1}\right)$ and $\Pi$ is the nearest point projection onto $\s^n$. 

A very important class of critical points are the so-called minimizers. More specifically, a \textit{minimizer}, or \textit{minimizing extrinsic $k$-harmonic map}, is a map $u \in W^{k,2}\left (B^n,\s^n \right )$ such that
\begin{equation*}\label{def-minimizer}
    E_{k}^{{\rm ext}}(u)\leq   E_{k}^{{\rm ext}}(v)
\end{equation*}
for all $v \in W^{k,2}\left (B^n,\s^n \right )$ such that $u-v \in W^{k,2}_0\left(B^n,\R^{n+1}\right )$. 

The study of the regularity of minimizers for the energy and the bienergy has a long history which began in the late seventies with the celebrated works of Hildebrandt et al. \cite{MR433502} and Uhlenbeck \cite{MR474389}. As for the biharmonic case, we cite \cite{MR1692148} and \cite{MR2153032}, where it was shown that every minimizing biharmonic map from a domain $\Omega \subset \R^m$ to $\s^n \,\,(m \geq5$) is smooth away from a closed set whose Hausdorff dimension is at most $m-5$. Other significant contributions in this field can be found in \cite{MR1101039,MR2389639,MR2854719,MR2566701,MR762354,MR2413109,MR2054520}.

If $u$ is an extrinsic $k$-harmonic map, we say that $u$ is \textit{stable} if
\begin{equation*}\label{second-variation}
 \left . \frac{d^2}{dt^2}\, E_{k}^{{\rm ext}}\left (u_t \right ) \right.\Big|_{t=0} \, \geq \, 0 
\end{equation*}
for all variations $u_t$ such that $u_t-u \in W^{k,2}_0\left(B^n,\R^{n+1}\right )$ . 

Clearly, if $u$ is an unstable critical point, then it cannot be a minimizer.

In this work we shall focus on the so-called equator map $u^*: B^n \to \s^n$ which is defined by

\[
\begin{array}{rccc}
u^* : &B^n &\to   &\s^n \subset\R^n \times \R \\
 & x&\xmapsto{\phantom{{\quad\quad}}}&\left ( \frac{x}{|x|} \,,\,0\right )
 \end{array}
\]

As we shall prove in Section\link\ref{proofs}, the first, simple step in our analysis is the following proposition:
\begin{proposition}\label{prop-eq-map}
The equator map $u^*:B^n \to \s^n$ belongs to $W^{k,2}\left (B^n,\s^n \right )$ if and only if $n \geq 2k+1$. Moreover, if $n \geq 2k+1$, it is an extrinsic $k$-harmonic map.
\end{proposition}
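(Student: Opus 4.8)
The plan is to treat the two assertions separately: the Sobolev membership is essentially a homogeneity count, while the criticality rests on the observation that every iterated Laplacian of the profile $v(x)=x/|x|$ is pointwise parallel to $v$.

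For the membership statement I would record that $u^*(x)=(x/|x|,0)$ is smooth on $B^n\setminus\{0\}$ and positively homogeneous of degree $0$, so each classical derivative $\nabla^j u^*$ is homogeneous of degree $-j$ and hence $|\nabla^j u^*(x)|\simeq |x|^{-j}$ on $B^n\setminus\{0\}$; the lower bound holds at the top order $j=k$ because $\nabla^k u^*$ does not vanish identically on the unit sphere. Passing to polar coordinates, $\int_{B^n}|\nabla^j u^*|^2\,dx\simeq\int_0^1 r^{\,n-1-2j}\,dr$, which is finite iff $n-1-2j>-1$, i.e. iff $n>2j$. The binding constraint is $j=k$, so $u^*\in W^{k,2}$ iff $n\ge 2k+1$, while the case $n\le 2k$ already fails at the top order. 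I would then check that the distributional derivatives of $u^*$ up to order $k$ coincide with the classical ones computed off the origin: integrating by parts over $B^n\setminus B_\varepsilon$, the boundary contribution on $\partial B_\varepsilon$ involves derivatives of $u^*$ of order at most $k-1$ and is $O(\varepsilon^{\,n-k})$, which tends to $0$ since $n-k\ge k+1>0$. As $|u^*|=1$ a.e., this gives $u^*\in W^{k,2}(B^n,\s^n)$.

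For criticality, the first variation along $u_t=\Pi(u^*+t\phi)$ is $\frac{d}{dt}E_k^{\rm ext}(u_t)\big|_{t=0}=2\int_{B^n}\langle\Delta^s u^*,\Delta^s w\rangle\,dx$ when $k=2s$, where $w=\phi-\langle\phi,u^*\rangle u^*$ is the tangential projection of $\phi$. Writing $u^*=(v,0)$, I would establish the structural fact by induction on $j$: for a radial profile $\rho(r)$ one has $\Delta\big(\rho(r)\,x\big)=\big(\rho''+\tfrac{n+1}{r}\rho'\big)\,x$, so starting from $v=r^{-1}x$ one obtains $\Delta^j v=c_j\,r^{-2j}\,v$ with $c_j=\prod_{i=0}^{j-1}\big(-(2i+1)(n-2i-1)\big)$; in particular every $\Delta^j v$ is parallel to $v$. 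Hence $\Delta^s u^*=(\Delta^s v,0)$, the last component of $w$ contributes nothing, and the first variation reduces to $2\int_{B^n}\langle\Delta^s v,\Delta^s w'\rangle\,dx$, where $w'=\phi'-\langle\phi',v\rangle v$ satisfies $\langle w',v\rangle=0$ pointwise.

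The heart of the argument, and the step I expect to be the main obstacle, is to move all the Laplacians onto $v$ and exploit $\Delta^k v\parallel v\perp w'$, despite $v$ being only $W^{k,2}$ (so $\Delta^k v$ need not be in $L^2$). I would first show that the distributional polyharmonic operator agrees with the classical one, $\Delta^k v=c_k\,r^{-2k}v$, with no mass concentrated at the origin: by the same boundary estimate as above, now of size $O(\varepsilon^{\,n-2j})$ at the $j$-th stage, which vanishes since $n>2j$ for all $j\le k$, one proves inductively that $\Delta^j v$ equals the stated $L^1_{\rm loc}$ function. Crucially, since $n\ge 2k+1$ we have $r^{-2k}\in L^1(B^n)$, so $\Delta^k v\in L^1(B^n)$. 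To transfer the derivatives I would approximate $w'$ by mollifications $w'_m=w'*\rho_{1/m}$, which converge to $w'$ in $W^{k,2}$ and a.e. and satisfy $\|w'_m\|_\infty\le\|w'\|_\infty<\infty$. For each smooth $w'_m$ the pairing is legitimate and equals $\int_{B^n}\langle\Delta^k v,w'_m\rangle\,dx$; letting $m\to\infty$, the left side converges because $\Delta^s w'_m\to\Delta^s w'$ in $L^2$, and the right side converges by dominated convergence with dominating function $\|w'\|_\infty\,|\Delta^k v|\in L^1(B^n)$. Therefore
\[
\int_{B^n}\langle\Delta^s v,\Delta^s w'\rangle\,dx=\int_{B^n}\langle\Delta^k v,w'\rangle\,dx=c_k\int_{B^n}r^{-2k}\,\langle v,w'\rangle\,dx=0,
\]
since $\langle v,w'\rangle\equiv 0$. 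The odd case $k=2s+1$ is identical after one further integration by parts: the first variation $2\int_{B^n}\langle\nabla\Delta^s v,\nabla\Delta^s w'\rangle\,dx$ reduces to $-\int_{B^n}\langle\Delta^k v,w'\rangle\,dx=0$ by the same mollification argument. This shows $u^*$ is a weakly $k$-harmonic map whenever $n\ge 2k+1$.
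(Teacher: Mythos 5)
Your proof is correct and follows essentially the same route as the paper's: the same induction on the radial profile gives $\Delta^j u^*=A_j(n)\,|x|^{-2j-1}(x,0)$ (your $c_j$ coincides with the paper's $A_j(n)$), the same homogeneity/integrability count yields membership in $W^{k,2}$ precisely when $n\ge 2k+1$, and criticality follows from $\Delta^k u^*$ being pointwise parallel to $u^*$. The only difference is one of detail: you make explicit, via the $O(\varepsilon^{\,n-2j})$ boundary estimates and the mollification/dominated-convergence step, the passage from the strong equation on $B^n\setminus\{0\}$ to weak criticality, which the paper compresses into the single sentence ``$u^*$ satisfies the equation strongly except at the origin, so it is a weak critical point.''
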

The main aim of this paper is to establish necessary and sufficient conditions on $k$ and $n$ under which $u^*: B^n \to \s^n$ is minimizing or unstable for the extrinsic $k$-energy functional. 

First, in order to motivate our work and set it in an appropriate context, we briefly recall some basic facts about the well-known cases $k=1,2$. 

In the case of maps $u: B^n \to \s^n$, our extrinsic energy \eqref{2s+1-energia-ext} with $s=0$ coincides, up to an irrelevant constant factor $1/2$, with the classical {\em energy functional} whose  critical points are the usual {\it harmonic maps}. We refer to \cite{ MR703510,MR1363513} for background on harmonic maps. 

Writing, here and below, $u$ for $u \circ i$, $i:\s^n \hookrightarrow \R^{n+1}$, the harmonicity equation is
\begin{equation}\label{Euler-Lag-equation-1-energy-ext}
\Delta u + \lambda_1 u=0 
\end{equation}
with
\begin{equation*}\label{lambda-explicit-1-energy}
\lambda_1= - \langle u, \Delta u \rangle = | \nabla u |^2 \,.
\end{equation*}
Note that $\langle\,,\, \rangle$ denotes the Euclidean scalar product 
and 
\begin{equation*}\label{gradiente-Laplaciano}
\nabla u= \left (\nabla u^1,\ldots, \nabla u^{n+1} \right )\,,\quad 
\quad \Delta u= \left (\Delta u^1,\ldots, \Delta u^{n+1} \right )\,\,.
\end{equation*}
Each entry of $\nabla u$ is an $n$-dimensional vector and our sign convention for the Laplace operator $\Delta$ is such that $\Delta f = f''$ on $\R$. 

J\"{a}ger and Kaul (see \cite{MR705882}) carried out a detailed qualitative study of rotationally symmetric solutions of \eqref{Euler-Lag-equation-1-energy-ext} and proved the following result concerning the equator map:
\begin{theorem}\label{Th-Jager-Kaul} The equator map $u^*: B^n \to \s^n$ is
\begin{itemize}
\item[{\rm (i)}] a minimizing harmonic map if $n \geq 7$;
\item[{\rm (ii)}] an unstable harmonic map if $3 \leq n <7$.
\end{itemize} 
\end{theorem}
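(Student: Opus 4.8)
The plan is to deduce both assertions from a single algebraic identity for the energy gap, combined with the sharp Hardy inequality
\begin{equation*}
\int_{B^n}|\nabla f|^2\,dx\;\geq\;\frac{(n-2)^2}{4}\int_{B^n}\frac{f^2}{|x|^2}\,dx\qquad\bigl(f\in W^{1,2}_0(B^n),\ n\geq3\bigr),
\end{equation*}
whose best constant $(n-2)^2/4$ is not attained. The elementary input, contained in Proposition \ref{prop-eq-map} for $k=1$, is that $u^*$ is weakly harmonic and that a direct computation gives $|\nabla u^*|^2=(n-1)/|x|^2$; hence \eqref{Euler-Lag-equation-1-energy-ext} reads $\Delta u^*=-\frac{n-1}{|x|^2}\,u^*$ in the weak sense. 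Everything then hinges on comparing the constant $(n-1)$ with the Hardy constant $(n-2)^2/4$, and one checks $\frac{(n-2)^2}{4}\geq n-1$ if and only if $n^2-8n+8\geq0$, i.e. $n\geq 4+2\sqrt2\approx6.83$, which for integers means exactly $n\geq7$.

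For part (i) I would argue by direct comparison, which in fact yields global minimality and not merely stability. Let $v\in W^{1,2}(B^n,\s^n)$ with $w:=v-u^*\in W^{1,2}_0(B^n,\R^{n+1})$ and expand
\begin{equation*}
\int_{B^n}|\nabla v|^2\,dx=\int_{B^n}|\nabla u^*|^2\,dx+2\int_{B^n}\langle\nabla u^*,\nabla w\rangle\,dx+\int_{B^n}|\nabla w|^2\,dx.
\end{equation*}
Testing the weak harmonic map equation against $w$ (by density of $C_0^\infty$) gives $\int_{B^n}\langle\nabla u^*,\nabla w\rangle\,dx=\int_{B^n}\frac{n-1}{|x|^2}\langle u^*,w\rangle\,dx$, and since $|v|=|u^*|=1$ one has $\langle u^*,w\rangle=-\tfrac12|w|^2$. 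Substituting yields the clean identity
\begin{equation*}
\int_{B^n}|\nabla v|^2\,dx-\int_{B^n}|\nabla u^*|^2\,dx=\int_{B^n}|\nabla w|^2\,dx-(n-1)\int_{B^n}\frac{|w|^2}{|x|^2}\,dx.
\end{equation*}
Applying Hardy's inequality to each component of $w$ bounds the right-hand side below by $\bigl(\tfrac{(n-2)^2}{4}-(n-1)\bigr)\int_{B^n}\frac{|w|^2}{|x|^2}\,dx$, which is nonnegative precisely when $n\geq7$. Thus $E_1^{\mathrm{ext}}(v)\geq E_1^{\mathrm{ext}}(u^*)$ for all competitors, proving minimality.

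For part (ii) I would use the second variation. For a variation field $V$ along $u^*$ with $V\perp u^*$, the Jacobi form of the energy at the harmonic map $u^*$ is $I(V)=\int_{B^n}\bigl(|\nabla V|^2-\tfrac{n-1}{|x|^2}|V|^2\bigr)\,dx$. Taking the pole direction $V=\psi(|x|)\,e_{n+1}$ with $e_{n+1}=(0,\dots,0,1)$, which is admissible because $u^*$ has vanishing last coordinate, reduces $I$ to the one–dimensional quadratic form
\begin{equation*}
I(V)=\omega_{n-1}\int_0^1\Bigl(\psi'(r)^2-\frac{n-1}{r^2}\,\psi(r)^2\Bigr)r^{n-1}\,dr .
\end{equation*}
Since for $3\leq n\leq6$ the sharp Hardy constant satisfies $(n-2)^2/4<n-1$ and is not attained, a near-optimal radial $\psi\in W^{1,2}_0$ makes $I(V)<0$, so $u^*$ is unstable; equivalently, this is the direction in which the identity above becomes negative.

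The main obstacle is precisely the sharpness together with the non-attainment of the Hardy constant: non-attainment is what converts the borderline strict inequality $(n-2)^2/4<n-1$ into the existence of a genuine energy-decreasing direction for $3\leq n\leq6$, while the opposite inequality for $n\geq7$ makes the comparison identity manifestly nonnegative. The remaining technical care lies in justifying the integration by parts against the singular map $u^*$ (legitimate by its weak harmonicity and a density argument, together with the finiteness of $\int_{B^n}|x|^{-2}|w|^2\,dx$ guaranteed by Hardy). I expect that in the higher–order setting of this paper the analogous identity will survive, but the scalar Hardy inequality will have to be replaced by the appropriate Rellich/Hardy inequalities for $\Delta^s$ and $\nabla\Delta^s$, whose sharp constants will govern the new threshold in $n$ and $k$.
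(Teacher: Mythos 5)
Your proposal is correct and follows essentially the same route as the paper: the paper recovers this theorem as the $k=1$ case of Theorem\link\ref{Th-equator-map-k>4}, whose proof rests on exactly your energy-gap identity (Lemma\link\ref{prop-stable-imply-minimising}, using $|u^*-v|^2=2-2\,u^*.v$), the sharp Hardy inequality with $\alpha_1(n)=(n-2)^2/4$ (Theorem\link\ref{Th-Hardy-ineq}), and, for instability, the second variation in the pole direction $(0,\dots,0,\eta)$ tested against near-optimizers of the Hardy quotient (Proposition\link\ref{Th-stab-inequality-general} together with \eqref{Hardy-best-constant}). Your comparison $\frac{(n-2)^2}{4}\geq n-1 \iff n^2-8n+8\geq 0$ is precisely the paper's condition $\mathcal{P}_1(n)\geq 0$.
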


When $k\geq2$ the difference between the intrinsic case and the extrinsic one appears.

The intrinsic approach, as suggested in \cite{MR703510}, \cite{MR0216519},  is the study of the so-called \textit{bienergy functional} whose critical points are called {\it biharmonic maps}. 
There have been extensive studies on biharmonic maps and we refer to \cite{MR3362186, MR4076824,MR886529, MR2375314,MR2301373,MR3357596} for an introduction to this topic. 

In the case of maps $u:B^n \to \s^n$ the bienergy functional, up to an irrelevant constant factor $1/2$, takes the following form:
\begin{equation}\label{bienergia-target-sphere}
    E_2(u)=\int_{B^n}\,|(\Delta u)^T|^2\,dx=\int_{B^n}\,\left ( |\Delta u|^2 - |\nabla u|^4\right )\,dx\,,
\end{equation} 
where $( \,\, )^T$ denotes the tangential component. 

In general, it is very difficult to apply variational methods and, particularly, direct minimization to deduce the existence of \textit{proper} (i.e., not harmonic) biharmonic maps. The main reason for this is the fact that harmonic maps trivially provide absolute minima for the bienergy. 

To overcome this difficulty, an interesting variant of \eqref{bienergia-target-sphere} has been introduced. This is precisely the functional $ E_{2}^{{\rm ext}}(u)$ defined in \eqref{2s-energia-ext} with $s=1$. This functional, called \textit{extrinsic bienergy} or \textit{Hessian energy}, depends on the embedding of $\s^n$ into the Euclidean space and a harmonic map into $\s^n$ is not necessarily a minimizer for $ E_{2}^{{\rm ext}}(u)$ (actually, it may even not be a critical point).

On the other hand, the study of the extrinsic bienergy is very rich from the point of view of the stability and regularity of solutions to a nonlinear elliptic system in a geometrically significant setting. Important contributions in this area can be found in \cite{MR2332421, MR1692148, MR2845758, MR2322746, MR2153032, MR2398228, MR2450176,  MR2054520}.

The Euler-Lagrange equation of $ E_{2}^{{\rm ext}}(u)$ is
\begin{equation*}\label{Euler-Lag-equation-2-energy-ext}
\Delta^2 u + \lambda_2 u=0 
\end{equation*}
with
\begin{equation*}\label{lambda-explicit-2-energy}
\lambda_2=\Delta\left ( \left |\nabla u \right |^2 \right )+\left |\Delta u \right |^2+2\,\left ( \nabla u .  \nabla \Delta u \right )  \,\,,
\end{equation*}
where, here and below, ``$.$'' denotes scalar product in the following sense:
\begin{equation}\label{def-scalar-prod}
\nabla u .  \nabla \Delta u= \sum_{j=1}^{n+1}\,\langle \nabla u^j ,  \nabla \Delta u^j \rangle\,.
\end{equation}
Hong and Thompson proved the following very interesting result:
\begin{theorem}\label{Th-equator-map-Hong} \cite{MR2322746} The equator map $u^*: B^n \to \s^n$ is
\begin{itemize}
\item[{\rm (i)}] a minimizing extrinsic biharmonic map if $n \geq 10$;
\item[{\rm (ii)}] an unstable extrinsic biharmonic map if $5 \leq n <10$.
\end{itemize} 
\end{theorem}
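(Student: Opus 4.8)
The plan is to analyze both assertions through a single pair of sharp constants: the best constant $R_n=\frac{n^2(n-4)^2}{16}$ in the Rellich inequality $\int_{\R^n}(\Delta\eta)^2\,dx\geq R_n\int_{\R^n}\frac{\eta^2}{|x|^4}\,dx$ (valid and optimal, but not attained, for $n\geq5$), and the coefficient produced by linearizing the extrinsic bienergy at $u^*$. First I would record the exact solution data for the equator map: a direct computation gives $\Delta u^*=-\frac{n-1}{|x|^2}\,u^*$ and hence $\Delta^2 u^*=\frac{3(n-1)(n-3)}{|x|^4}\,u^*$, so that $\lambda_2=-\frac{3(n-1)(n-3)}{|x|^4}$ and $c_n:=3(n-1)(n-3)$. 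The whole theorem then hinges on the elementary dichotomy
\[
R_n\geq c_n\iff n\geq 10,\qquad R_n<c_n\iff 5\leq n\leq 9,
\]
which one checks by comparing the quartic $n^2(n-4)^2$ with $48(n-1)(n-3)$.

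For part (ii) (instability when $5\leq n<10$) I would test the second variation against the normal variations $\phi=\eta(|x|)\,e_{n+1}$, where $e_{n+1}=(0,\dots,0,1)\perp u^*$ and $\eta\in C_0^\infty$. Because $e_{n+1}$ is orthogonal to the image equator, the variation $u_t=\Pi(u^*+t\phi)$ expands as $u_t=u^*+t\eta e_{n+1}-\tfrac{t^2}{2}\eta^2 u^*+O(t^3)$, and an integration by parts using $\Delta^2u^*=-\lambda_2 u^*$ collapses the Hessian to the radial quadratic form
\[
\frac12\frac{d^2}{dt^2}E_2^{\rm ext}(u_t)\Big|_{t=0}=\int_{B^n}(\Delta\eta)^2\,dx-c_n\int_{B^n}\frac{\eta^2}{|x|^4}\,dx.
\]
Since $R_n<c_n$ in this range and the sharp Rellich constant is not attained, one can choose $\eta$ (a truncation of an approximate Rellich extremal of the form $|x|^{-\beta}$) for which the right-hand side is strictly negative; this makes $u^*$ unstable.

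For part (i) (minimality when $n\geq10$) I would turn the nonnegativity of that same quadratic form into a global comparison. The plan is to show, for every competitor $v$ with $v-u^*\in W_0^{2,2}(B^n,\R^{n+1})$, that the energy gap is bounded below by a manifestly nonnegative expression controlled by the last coordinate $a:=v^{n+1}$. Writing $v=(b,a)$ with $|b|^2+a^2=1$ and using $|\Delta v|^2=|\Delta b|^2+(\Delta a)^2$, the goal is the integrated estimate
\[
E_2^{\rm ext}(v)-E_2^{\rm ext}(u^*)\ \geq\ \big(R_n-c_n\big)\int_{B^n}\frac{a^2}{|x|^4}\,dx\ +\ (\text{nonnegative remainder}),
\]
in which the Rellich inequality is spent on $(\Delta a)^2$ and the constraint $|b|^2=1-a^2$ is used to extract the coefficient $c_n$ from $|\Delta b|^2$. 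When $n\geq10$ the bracket $R_n-c_n$ is nonnegative, so $u^*$ minimizes. A preliminary reduction — showing it suffices to test equivariant competitors of the form $v=(\cos\psi(|x|)\tfrac{x}{|x|},\sin\psi(|x|))$, or else an averaging/symmetrization argument — can be used to organize the algebra.

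The main obstacle is precisely this minimality direction. Nonnegativity of the second variation is only a necessary condition, so the difficulty is to upgrade it to a genuine global lower bound: one must absorb all the nonlinear terms forced by the constraint $|v|=1$ into nonnegative contributions while losing nothing in the sharp constant, so that the threshold stays exactly at $n=10$ rather than drifting to a larger dimension. Controlling the coupling between the horizontal part $b$ and the latitude $a$, together with the fact that the extremals of the Rellich inequality are not admissible, is where the real work lies; the instability half, by contrast, is a one-line consequence of $R_n<c_n$ once the second variation has been reduced.
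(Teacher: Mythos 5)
Your identification of the two competing constants is exactly right and matches the paper: the sharp Rellich constant $R_n=\alpha_2(n)=n^2(n-4)^2/16$ and the coefficient $c_n=A_2(n)=3(n-1)(n-3)$ coming from $\Delta^2u^*=\frac{3(n-1)(n-3)}{|x|^4}\,u^*$, with the dichotomy at $n=10$. Your instability argument is also essentially the paper's: the variation $u_t=(u^*+t\eta e_{n+1})/\sqrt{1+t^2\eta^2}$ reduces the Hessian to $\int(\Delta\eta)^2-c_n\int\eta^2|x|^{-4}$, and near-extremizers of the (non-attained) sharp Rellich inequality make this negative when $R_n<c_n$. That half is sound.

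The gap is in part (i), which you yourself flag as ``where the real work lies'' and then do not carry out. Your plan --- split $v=(b,a)$, spend Rellich on $(\Delta a)^2$, extract $c_n$ from $|\Delta b|^2$ via the constraint, and possibly reduce to equivariant competitors --- is both unproved and harder than necessary; in particular the reduction to maps of the form $(\cos\psi\,\tfrac{x}{|x|},\sin\psi)$ is itself a nontrivial unjustified step for the Hessian energy. The point you are missing is that there are \emph{no} nonlinear remainder terms to absorb: the energy gap is \emph{exactly} the stability quadratic form evaluated at $\phi=u^*-v$. Concretely, since $u^*$ satisfies $\Delta^2u^*=(\Delta^2u^*.\,u^*)\,u^*$ strongly away from the origin, testing weakly against $\phi=u^*-v\in W^{2,2}_0$ gives
\begin{equation*}
\int_{B^n}\Delta u^*.\,\Delta v\,dx=\int_{B^n}(\Delta^2u^*.\,u^*)\,u^*.\,v\,dx ,
\end{equation*}
and since $|u^*-v|^2=2-2\,u^*.\,v$ pointwise (both maps take values in $\s^n$), substituting $\phi=u^*-v$ into $\int|\Delta\phi|^2-c_n|x|^{-4}|\phi|^2\geq0$ and using the identity above makes every cross term cancel, leaving precisely $E_2^{\rm ext}(v)\geq E_2^{\rm ext}(u^*)$. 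Thus the extended stability inequality (valid for all $\phi\in W^{2,2}_0$ by density, which is where $R_n\geq c_n$ is used) already implies minimality with the threshold exactly at $n=10$; no coordinate splitting, symmetrization, or control of coupling between $b$ and $a$ is needed. Without this step your argument for (i) is a program, not a proof.
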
 
When $k \geq3$, an intrinsic approach was proposed in \cite{MR703510,MR0216519} and several basic results were obtained in a series of paper by Maeta et al. (see \cite{MR2869168, MR3403738,MR3314128, MR3371364}). For recent progresses in this context and an updated bibliography we refer to \cite{MR4106647} and references therein. 

On the other hand, as we already pointed out for the case $k=2$, the extrinsic energy functionals \eqref{2s-energia-ext}, \eqref{2s+1-energia-ext} are more suitable to apply methods of elliptic analysis and calculus of variations.

More specifically, the main aim of this paper is to prove a version of Theorems\link\ref{Th-Jager-Kaul} and \ref{Th-equator-map-Hong} in the case that $k \geq3$. 

In order to state our result, it is convenient to introduce the following two sets of constants which depend on $n$ and $k$. First, we set
\begin{equation}\label{def-A-j-n}
A_{k}(n)=(-1)^k \, \, \prod_{i=1}^k(n-2i+1)(2i-1)\,.
\end{equation}
Next, we define $\alpha_k(n)$ as follows:

\begin{equation}\label{def-alpha-k}
\begin{array}{lcll}
\alpha_{2s}(n)&=&\displaystyle{\frac{1}{2^{4s}} \, \prod_{i=1}^s\Big [ (n - 4 i )^2 (n + 4 i -4)^2 \Big ]} & (s \geq1) \\
\alpha_{2s+1}(n)&=&\displaystyle{\frac{1}{2^{4s+2}} \,(n - 
    2)^2  \prod_{i=1}^s\Big [ (n - 4 i  -2)^2 (n + 4 i -2)^2 \Big ]}& (s \geq 1) \\
    \alpha_1(n)&=&\dfrac{(n-2)^2}{4}\,.& 
    \end{array}
\end{equation}

As we shall see in Section\link\ref{proofs}, the constants $A_k(n)$ arise in the computation of $\Delta^k u^*$, while the $\alpha_k(n)$'s occur in a family of high order Hardy inequalities. 

Finally, in the notation of \eqref{def-A-j-n}, \eqref{def-alpha-k}, we define the following family of polynomials $\mathcal{P}_k(n)$, $k \geq 1$:
\begin{equation}\label{def-P-k-n}
\begin{array}{lcll}
\mathcal{P}_{2s}(n)&=&\alpha_{2s}(n)-A_{2s}(n) &(s \geq1) \\
\mathcal{P}_{2s+1}(n)&=&\alpha_{2s+1}(n)+A_{2s+1}(n) &(s \geq 0)\,. 
\end{array}
\end{equation}
Our main result is
\begin{theorem}\label{Th-equator-map-k>4} Let $\mathcal{P}_k(n)$ be the polynomial defined in \eqref{def-P-k-n} and assume $n \geq 2k+1$. Then the equator map $u^*:B^n \to \s^n$ is
\begin{itemize}
\item[{\rm (i)}] an energy minimizing extrinsic $k$-harmonic map if $\mathcal{P}_{k}(n)\geq0$;
\item[{\rm (ii)}] an unstable extrinsic $k$-harmonic map if $\mathcal{P}_{k}(n)<0$.
\end{itemize}
\end{theorem}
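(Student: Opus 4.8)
The plan is to derive a single exact identity for the energy gap $E_k^{\rm ext}(v)-E_k^{\rm ext}(u^*)$ and then to feed it into the sharp higher order Hardy inequalities governed by the constants $\alpha_k(n)$; both halves of the theorem then fall out of the sign of $\mathcal{P}_k(n)$. The first ingredient is the computation behind the $A_k(n)$: starting from $\Delta|x|^\beta=\beta(\beta+n-2)|x|^{\beta-2}$ and $\Delta u^*=-(n-1)|x|^{-2}u^*$, an elementary induction on the first $n$ components gives
\[
\Delta^k u^* \;=\; A_k(n)\,\frac{u^*}{|x|^{2k}}\,,
\]
so in particular $\Delta^{2s}u^*=A_{2s}(n)|x|^{-2k}u^*$ and $\Delta^{2s+1}u^*=A_{2s+1}(n)|x|^{-2k}u^*$.

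For the minimizing statement (i), fix $v\in W^{k,2}(B^n,\s^n)$ with $w:=v-u^*\in W_0^{k,2}(B^n,\R^{n+1})$ (we may assume $E_k^{\rm ext}(v)<\infty$) and write $\mathcal L=\Delta^s$ if $k=2s$, $\mathcal L=\nabla\Delta^s$ if $k=2s+1$. Expanding the square gives
\[
E_k^{\rm ext}(v)=E_k^{\rm ext}(u^*)+2\int_{B^n}\langle \mathcal L u^*,\mathcal L w\rangle\,dx+\int_{B^n}|\mathcal L w|^2\,dx .
\]
Integration by parts --- legitimate since $u^*\in W^{k,2}$ and $C_0^\infty$ is dense in $W_0^{k,2}$ --- moves every derivative onto $u^*$, and the formula for $\Delta^k u^*$ turns the cross term into $2\,(\pm A_k(n))\int_{B^n}|x|^{-2k}\langle u^*,w\rangle\,dx$. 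The decisive point is purely algebraic: the constraints $|u^*|=|v|=1$ force
\[
\langle u^*,w\rangle=\langle u^*,v\rangle-1=-\tfrac12|v-u^*|^2=-\tfrac12|w|^2 ,
\]
so this nominally linear term is in fact quadratic. Collecting everything yields the exact identity
\[
E_k^{\rm ext}(v)-E_k^{\rm ext}(u^*)=\int_{B^n}|\mathcal L w|^2\,dx\;-\;A_k(n)\int_{B^n}\frac{|w|^2}{|x|^{2k}}\,dx
\]
when $k$ is even, and the same with $+A_k(n)$ when $k$ is odd.

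Applying the sharp Hardy inequality $\int_{B^n}|\mathcal L w|^2\,dx\ge \alpha_k(n)\int_{B^n}|x|^{-2k}|w|^2\,dx$ componentwise to $w$ gives, in either parity,
\[
E_k^{\rm ext}(v)-E_k^{\rm ext}(u^*)\;\ge\;\mathcal{P}_k(n)\int_{B^n}\frac{|w|^2}{|x|^{2k}}\,dx ,
\]
which is $\ge 0$ exactly when $\mathcal{P}_k(n)\ge 0$; this proves (i). For the instability statement (ii) I would test with the vertical variations $u_t=\Pi(u^*+t\phi\,e_{n+1})$, $\phi\in C_0^\infty(B^n)$. Since $u^*\perp e_{n+1}$, the same expansion produces the second variation
\[
\frac{d^2}{dt^2}E_k^{\rm ext}(u_t)\Big|_{t=0}=2\int_{B^n}|\mathcal L\phi|^2\,dx\;-\;2A_k(n)\int_{B^n}\frac{\phi^2}{|x|^{2k}}\,dx
\]
for $k$ even (and $+2A_k(n)$ for $k$ odd), whose infimum relative to the weighted norm $\int_{B^n}|x|^{-2k}\phi^2\,dx$ equals $2\mathcal{P}_k(n)$ by sharpness of the Hardy constant; hence if $\mathcal{P}_k(n)<0$ a suitable $\phi$ renders it strictly negative and $u^*$ unstable.

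The main obstacle is entirely contained in the Hardy input. One must first prove the higher order Hardy inequalities with the \emph{optimal} constants $\alpha_k(n)$ of \eqref{def-alpha-k}; the product form of $\alpha_k(n)$ points to an iterative argument that strips off one factor of $\Delta$ (or $\nabla$) at a time, but propagating sharpness through the iteration is the delicate step. Second, and specifically for (ii), the extremal functions for these inequalities are pure powers $|x|^{-\gamma}$ lying outside $W_0^{k,2}$, so the destabilizing $\phi$ must be built by truncation and mollification, together with an estimate showing the resulting error terms cannot absorb the strict deficit $\mathcal{P}_k(n)<0$. Once these sharp inequalities and the identity for $\Delta^k u^*$ are available, the exact energy gap formula makes the minimizing half (i) immediate and reduces (ii) to the well-understood non-attainment of the Hardy constant.
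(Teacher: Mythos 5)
Your proposal is correct and follows essentially the same route as the paper: your exact energy-gap identity is precisely the paper's Lemma\link\ref{prop-stable-imply-minimising} (the combination of \eqref{eq-min-1} and \eqref{eq-min-2}, hinging on the same observation $\langle u^*,v-u^*\rangle=-\tfrac12|v-u^*|^2$), your second-variation computation along $\Pi(u^*+t\phi e_{n+1})$ is Proposition\link\ref{Th-stab-inequality-general}, and both halves are then closed with the sharp higher-order Hardy inequalities of Theorem\link\ref{Th-Hardy-ineq} exactly as in the paper. The only difference is that the paper treats the sharp Hardy constants $\alpha_k(n)$ as a known input (citing Galaktionov) and obtains the destabilizing test function directly from the variational characterization \eqref{Hardy-best-constant} of the infimum over $C^{\infty}_0$, so the truncation/mollification construction you flag as the main obstacle is not needed.
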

In the case that $k=1$ the extrinsic energy coincides with the classical intrinsic energy of harmonic maps and the analysis of 
\[
\mathcal{P}_1(n)=\frac{1}{4} \left(n^2-8 n+8\right)
\] 
shows immediately that $\mathcal{P}_{1}(n)\geq0$ if and only if $n \geq7$, and so we recover Theorem\link\ref{Th-Jager-Kaul}. Similarly, 
\[
\mathcal{P}_2(n)=\frac{1}{16} (n-4)^2 n^2-3 (n-3) (n-1)
\]
and it is easy to check that $\mathcal{P}_{2}(n)<0$ when $5 \leq n \leq 9$, while $\mathcal{P}_{2}(n)\geq0$ if $n \geq 10$: this is the result of Hong and Thompson stated in Theorem\link\ref{Th-equator-map-Hong}. 

Next, it is important to discuss more in detail the consequences and specific applications of our Theorem\link\ref{Th-equator-map-k>4}. To give a first idea of these applications, the analysis of $\mathcal{P}_{3}(n)$ and $\mathcal{P}_{4}(n)$, as described in Remark\link\ref{Rem-n*k}, leads us to the following corollaries:
\begin{corollary}\label{Cor-equator-map-k=3}
The equator map $u^*: B^n \to \s^n$ is
\begin{itemize}
\item[{\rm (i)}] a minimizing extrinsic triharmonic map if $n \geq 12$;
\item[{\rm (ii)}] an unstable extrinsic triharmonic map if $7 \leq n <12$.
\end{itemize} 
\end{corollary}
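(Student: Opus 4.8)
The plan is to derive the corollary directly from Theorem~\ref{Th-equator-map-k>4} with $k=3$, so that everything reduces to determining the sign of the explicit polynomial $\mathcal{P}_3(n)$ on the integers $n \geq 2k+1 = 7$. First I would write $\mathcal{P}_3(n)$ out. Since $3 = 2s+1$ with $s=1$, formulas \eqref{def-A-j-n} and \eqref{def-alpha-k} give
\[
A_3(n) = -15\,(n-1)(n-3)(n-5), \qquad \alpha_3(n) = \frac{1}{64}\,(n-2)^2(n-6)^2(n+2)^2,
\]
so that by \eqref{def-P-k-n}
\[
\mathcal{P}_3(n) = \alpha_3(n) + A_3(n) = \frac{1}{64}\,(n-2)^2(n-6)^2(n+2)^2 - 15\,(n-1)(n-3)(n-5).
\]

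Next I would analyze the sign. For the instability statement (ii) only the finite verification that $\mathcal{P}_3(n) < 0$ at $n = 7,8,9,10,11$ is needed, and this is immediate by substitution (e.g.\ $\mathcal{P}_3(11) < 0$, while $\mathcal{P}_3(12) = 630 > 0$). For the minimizing statement (i) I would prove $\mathcal{P}_3(n) \geq 0$ for all real $n \geq 12$. Note that for $n \geq 7$ the cubic $(n-1)(n-3)(n-5)$ is positive, hence $A_3(n) < 0$ and $\mathcal{P}_3(n) = \alpha_3(n) - 15(n-1)(n-3)(n-5)$; the mechanism is that the degree-six positive term $\alpha_3(n)$ outgrows the negative cubic once $n$ is large enough. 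To make this quantitative, use the elementary bound $(n-2)^2 = (n-1)(n-3) + 1 \geq (n-1)(n-3)$, which reduces the inequality $\alpha_3(n) \geq 15(n-1)(n-3)(n-5)$ to
\[
g(n) := (n-6)^2(n+2)^2 - 960\,(n-5) \geq 0, \qquad n \geq 12.
\]
Since $g(12) = 336 > 0$ and $g'(n) = 4\,(n-2)(n-6)(n+2) - 960 > 0$ for $n \geq 12$, the function $g$ is increasing and hence positive on $[12,\infty)$. This gives $\mathcal{P}_3(n) \geq 0$ there, and Theorem~\ref{Th-equator-map-k>4} then yields both assertions.

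The part demanding the most care is locating the exact threshold $n = 12$. Because $\mathcal{P}_3$ has degree six, the positivity for $n \geq 12$ cannot be read off from a single evaluation: one must exclude any further sign change on the whole infinite range, which is precisely what the monotonicity argument for $g$ accomplishes. By contrast, the negativity at the five values $n = 7,\ldots,11$ is a routine finite computation, so the only genuinely infinite part of the statement is handled by the monotonicity of $g$.
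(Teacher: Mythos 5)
Your computations of $A_3(n)=-15(n-1)(n-3)(n-5)$ and $\alpha_3(n)=\tfrac{1}{64}(n-2)^2(n-6)^2(n+2)^2$ are correct, the five evaluations at $n=7,\dots,11$ are negative, $\mathcal{P}_3(12)=11025-10395=630$, and the reduction via $(n-2)^2=(n-1)(n-3)+1$ together with the monotonicity of $g(n)=(n-6)^2(n+2)^2-960(n-5)$ on $[12,\infty)$ is sound; so the proof is correct. It is, however, a genuinely different route from the paper's. The paper obtains the corollary by combining Theorem~\ref{Th-equator-map-k>4} with the general Lemma~\ref{lemma-Pj(n)}, whose inductive step (comparing the ratios $\gamma_i$ and $\beta_i$) shows for \emph{every} $k$ that $\mathcal{P}_k$ changes sign exactly once on $[2k+1,\infty)$; the threshold $n_3^*=12$ is then located by a finite (computer-assisted) search below the a priori bound $4(k+1)$. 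Your argument instead handles the infinite range directly for the single case $k=3$ by an explicit, hand-checkable monotonicity bound on the reduced polynomial $g$. What the paper's approach buys is uniformity in $k$ (one lemma covers all corollaries of this type); what yours buys is a completely self-contained, elementary verification for $k=3$ that needs neither the general single-sign-change lemma nor any computation beyond a few integer evaluations. Both correctly identify that the only delicate point is excluding a second sign change of the degree-six polynomial, and both resolve it by a monotonicity mechanism, just at different levels of generality.
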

\begin{corollary}\label{Cor-equator-map-k=4}
The equator map $u^*: B^n \to \s^n$ is
\begin{itemize}
\item[{\rm (i)}] a minimizing extrinsic $4$-harmonic map if $n \geq 15$;
\item[{\rm (ii)}] an unstable extrinsic $4$-harmonic map if $9 \leq n <15$.
\end{itemize} 
\end{corollary}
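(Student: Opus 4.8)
The plan is to obtain Corollary \ref{Cor-equator-map-k=4} as a direct specialization of Theorem \ref{Th-equator-map-k>4} to $k=4$, followed by a sign analysis of the polynomial $\mathcal{P}_4(n)$ in the admissible range $n \geq 2k+1 = 9$. First I would make the two constants in \eqref{def-P-k-n} explicit. Since $4 = 2s$ with $s=2$, definition \eqref{def-A-j-n} gives
\[
A_4(n) = 105\,(n-1)(n-3)(n-5)(n-7)\,,
\]
while the first line of \eqref{def-alpha-k} gives
\[
\alpha_4(n) = \frac{1}{256}\,n^2(n-4)^2(n+4)^2(n-8)^2\,.
\]
Consequently
\[
\mathcal{P}_4(n) = \frac{1}{256}\,n^2(n-4)^2(n+4)^2(n-8)^2 - 105\,(n-1)(n-3)(n-5)(n-7)\,,
\]
and by Theorem \ref{Th-equator-map-k>4} the equator map is a minimizing extrinsic $4$-harmonic map when $\mathcal{P}_4(n) \geq 0$ and unstable when $\mathcal{P}_4(n) < 0$. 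It therefore remains to verify that $\mathcal{P}_4(n) < 0$ for $9 \leq n < 15$ and that $\mathcal{P}_4(n) \geq 0$ for every integer $n \geq 15$.

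Part (ii) is a finite check: substituting $n = 9,10,11,12,13,14$ into the displayed formula for $\mathcal{P}_4(n)$ yields six negative values (at the boundary, $\mathcal{P}_4(14) = 893025 - 945945 = -52920 < 0$), which establishes instability throughout that range.

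The substantive point is part (i), which concerns infinitely many values of $n$ and so cannot be settled by evaluation alone; this is the only real obstacle, and it is a mild one. Rather than expanding the degree-eight polynomial, I would argue by an elementary estimate. For $n \geq 15$ all factors of $\alpha_4(n)$ are positive, and discarding the negative shifts in $A_4(n)$ gives the crude bound $A_4(n) \leq 105\,(n-4)^4$. Cancelling the common factor $(n-4)^2 > 0$, it then suffices to prove $n^2(n+4)^2(n-8)^2 \geq 26880\,(n-4)^2$; since every factor is positive for $n \geq 15$ and $26880 < 164^2$, this reduces to the cubic inequality
\[
g(n) := n^3 - 4n^2 - 196n + 656 \geq 0 \qquad (n \geq 15)\,,
\]
which follows at once from $g(15) = 191 > 0$ together with $g'(n) = 3n^2 - 8n - 196 > 0$ for $n \geq 15$, so that $g$ is positive and increasing on $[15,\infty)$. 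This closes part (i). Alternatively, one may expand $\mathcal{P}_4(n)$ and verify with a computer algebra system that its largest real root lies in $(14,15)$, whence the positive leading coefficient $1/256$ forces $\mathcal{P}_4(n) > 0$ for all $n \geq 15$; the factorized forms above make transparent why the degree-eight growth of $\alpha_4$ overtakes the degree-four polynomial $A_4$ precisely at this threshold.
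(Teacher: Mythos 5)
Your proposal is correct, and it shares the paper's skeleton while diverging at the decisive step. Your explicit formulas $A_4(n)=105\,(n-1)(n-3)(n-5)(n-7)$ and $\alpha_4(n)=\tfrac{1}{256}\,n^2(n-4)^2(n+4)^2(n-8)^2$ follow correctly from \eqref{def-A-j-n} and \eqref{def-alpha-k} with $s=2$, the six evaluations on $9\le n\le 14$ are indeed all negative (your boundary value $\mathcal{P}_4(14)=893025-945945=-52920$ checks out), and the chain $A_4(n)\le 105\,(n-4)^4$, then $n(n+4)(n-8)\ge 164\,(n-4)$ via $g(n)=n^3-4n^2-196n+656\ge 0$ with $g(15)=191>0$ and $g'(n)>0$ on $[15,\infty)$, is valid since $26880=256\cdot 105<164^2$. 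The paper, by contrast, never estimates $\mathcal{P}_4$ by hand: it treats all $k$ at once through Lemma~\ref{lemma-Pj(n)}, whose induction step (if $\mathcal{P}_k(N)\ge 0$ for some $N>2k+1$, then $\mathcal{P}_k(N+1)>0$, proved by the ratio comparison $\gamma_i>\beta_i$) guarantees a single sign-change threshold $n_k^*$, and then locates $n_4^*=15$ by the finite computer search of Remark~\ref{Rem-n*k} inside the a priori window $n_k^*<4(k+1)$. So the paper's route buys uniformity in $k$ and feeds the algorithm behind table \eqref{table-n-j}, while yours buys a fully self-contained, hand-verifiable argument for $k=4$ that is logically independent of Lemma~\ref{lemma-Pj(n)} and of any computer check. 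One small wording caution: ``discarding the negative shifts'' does not literally justify $A_4(n)\le 105\,(n-4)^4$, since $(n-1)>n-4$; the correct (and still elementary) justification is the symmetric pairing $(n-1)(n-7)=(n-4)^2-9\le (n-4)^2$ and $(n-3)(n-5)=(n-4)^2-1\le (n-4)^2$, which is presumably what you intended — with that phrasing fixed, the proof is complete.
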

Next, we illustrate in more detail the consequences of Theorem\link\ref{Th-equator-map-k>4} when the integer $k$ is arbitrary. 

The following technical lemma, which we shall prove in Section\link\ref{proofs}, contains some information which is very useful for our purposes.
\begin{lemma}\label{lemma-Pj(n)}
\begin{equation}\label{Pj(2j+1)<0}
\mathcal{P}_k(2k+1) <0 \quad \forall k \geq 1 \,.
\end{equation}
Moreover, for all $k \geq 1$, there exists an integer $n_k^* > 2k+1$ such that the $\mathcal{P}_k(n) \geq 0$ if $n \geq n_k^*$, and $\mathcal{P}_k(n) < 0$ if $2k+1 \leq n < n_k^*$.
\end{lemma}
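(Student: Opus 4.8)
The plan is to reduce the entire statement to the single auxiliary function $R_k(n):=\alpha_k(n)/|A_k(n)|$ and study it on the half-line $n\ge 2k+1$. First I would record the signs. For $n\ge 2k+1$ every factor $n-2i+1$ ($1\le i\le k$) is positive, so $\prod_{i=1}^k(n-2i+1)(2i-1)>0$ and hence $A_k(n)=(-1)^k|A_k(n)|$. Feeding this into \eqref{def-P-k-n} (even $k$ carries a $-A_k$, odd $k$ a $+A_k$) yields the uniform formula
\[
\mathcal{P}_k(n)=\alpha_k(n)-|A_k(n)|,\qquad |A_k(n)|=(2k-1)!!\prod_{i=1}^k\bigl(n-(2i-1)\bigr)\quad(n\ge 2k+1).
\]
Next I would observe that $\alpha_k$ is the square of a polynomial $\beta_k$ with explicit linear factors: in the even case $\beta_{2s}(n)=2^{-2s}\prod_{i=1}^s(n-4i)(n+4i-4)$ and in the odd case $\beta_{2s+1}(n)=2^{-(2s+1)}(n-2)\prod_{i=1}^s(n-4i-2)(n+4i-2)$, with $s=0$ recovering $\alpha_1$. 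The only facts I need are that the roots of $\beta_k$ are bounded above by $2k$ while those of $|A_k|$ are the odd integers $1,3,\dots,2k-1$; thus for real $n\ge 2k+1$ every factor is strictly positive.

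For the strict inequality \eqref{Pj(2j+1)<0} I would evaluate at $n=2k+1$. A short computation shows the factors of $\beta_k(2k+1)$ are exactly the integers $\equiv 1\pmod 4$ from $1$ to $4k-3$, so $\beta_k(2k+1)=2^{-k}\prod_{j=0}^{k-1}(4j+1)$, while $|A_k(2k+1)|=2^k\,k!\,(2k-1)!!$. Hence $\mathcal{P}_k(2k+1)<0$ is equivalent to
\[
\Bigl[\prod_{j=0}^{k-1}(4j+1)\Bigr]^2<8^k\,k!\,(2k-1)!!=\prod_{j=0}^{k-1}8(j+1)(2j+1),
\]
which follows term by term from $(4j+1)^2=16j^2+8j+1<16j^2+24j+8=8(j+1)(2j+1)$ for every $j\ge 0$.

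For the threshold statement I would show that $R_k$ is \emph{strictly increasing} on $[2k+1,\infty)$. Since $R_k(2k+1)<1$ by the previous step, $R_k(n)\to\infty$ (because $\deg\alpha_k=2k>k=\deg|A_k|$ with positive leading coefficient), and $\operatorname{sgn}\mathcal{P}_k(n)=\operatorname{sgn}(R_k(n)-1)$ on $n\ge 2k+1$, monotonicity forces exactly one sign change and identifies $n_k^*$ as the least integer with $R_k\ge 1$ (necessarily $>2k+1$). Strict monotonicity I would extract from the logarithmic derivative
\[
\frac{R_k'(n)}{R_k(n)}=2\sum_{r\in\mathrm{Roots}(\beta_k)}\frac{1}{n-r}-\sum_{m\in\mathrm{Roots}(|A_k|)}\frac{1}{n-m},
\]
where the first sum weights each root of $\beta_k$ by $2$, giving $2k$ positive terms against the $k$ terms of the second.

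The crux, and the step I expect to be the main obstacle, is to prove this logarithmic derivative is positive. I would do it by a matching argument: construct an injection from the $k$ roots $m=1,3,\dots,2k-1$ of $|A_k|$ into the $2k$-element multiset of roots of $\beta_k$ (each root taken twice) so that every $m$ is sent to some $r(m)\ge m$. Then each paired difference $\frac{2}{n-r(m)}-\frac{1}{n-m}\ge\frac{1}{n-r(m)}>0$, and the $k$ unused roots contribute further positive terms. Discarding the nonpositive roots, the positive roots of $\beta_k$ with multiplicity two are $\{4,4,8,8,\dots,4s,4s\}$ in the even case and $\{2,2,6,6,\dots,4s+2,4s+2\}$ in the odd case; ordering both lists and pairing in decreasing order verifies $r(m)\ge m$ in each parity (the arithmetic differs slightly between the two, which is why they must be treated separately). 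I would finally stress that $\mathcal{P}_k$ itself is genuinely non-monotone, typically decreasing before it increases, so passing to the ratio $R_k$ is not cosmetic but precisely the device that delivers the single-crossing property.
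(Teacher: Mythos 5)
Your proof is correct, and it is worth separating its two halves. The evaluation at $n=2k+1$ is, after unwinding notation, the same computation as the paper's: both reduce $\mathcal{P}_k(2k+1)<0$ to the termwise inequality $(4j+1)^2<8(j+1)(2j+1)$ (the paper's version is this divided by $4$, namely $4j^2+2j+\tfrac{1}{4}<4j^2+6j+2$), though you treat both parities at once via the unified formula $\mathcal{P}_k=\alpha_k-|A_k|$ on $n\ge 2k+1$, where the paper writes out $k=2s$ and declares $k=2s+1$ analogous. Where you genuinely diverge is the threshold statement. The paper proves the discrete implication that $\mathcal{P}_k(N)\ge 0$ for an integer $N>2k+1$ forces $\mathcal{P}_k(N+1)>0$, by factoring $\alpha_k(N+1)/\alpha_k(N)=\prod\gamma_i$ and $A_k(N+1)/A_k(N)=\prod\beta_i$ and checking $\gamma_i>\beta_i$ by direct algebra; you instead prove that the ratio $R_k=\alpha_k/|A_k|$ is strictly increasing on the whole real half-line $[2k+1,\infty)$ via its logarithmic derivative and a sorted matching of the roots of $|A_k|$ into the doubled positive roots of $\beta_k$. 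Both arguments rest on the same underlying fact --- monotonicity of the ratio $\alpha_k/|A_k|$ --- but yours yields single crossing over the reals rather than only over the integers, at the cost of a combinatorial matching that must be verified separately in each parity (I checked that pairing in decreasing order does give $r(m)\ge m$ in both cases, so this step is sound); the paper's version is more elementary and avoids calculus. One bookkeeping slip to fix: if the injection is into the $2k$-element multiset, the paired difference should read $\frac{1}{n-r(m)}-\frac{1}{n-m}\ge 0$, with strict positivity then supplied by the unused slots and the nonpositive roots of $\beta_k$; the expression $\frac{2}{n-r(m)}-\frac{1}{n-m}$ as written would double-count a root whose two copies are both used. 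This does not affect the validity of the conclusion.
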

Now, as an immediate consequence of Theorem\link\ref{Th-equator-map-k>4} and Lemma\link\ref{lemma-Pj(n)}, we can state:
\begin{corollary}\label{Cor-equator-map-k>4} Let $n_k^*$ be the integer of Lemma\link\ref{lemma-Pj(n)}. Then the equator map $u^*: B^n \to \s^n$ is
\begin{itemize}
\item[{\rm (i)}] a minimizing extrinsic $k$-harmonic map if $n \geq n_k^*$;
\item[{\rm (ii)}] an unstable extrinsic $k$-harmonic map if $2k+1 \leq n <n_k^*$.
\end{itemize} 
\end{corollary}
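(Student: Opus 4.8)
The statement is the direct packaging of the two preceding results, so the plan is first to record the trivial deduction and then to indicate where the real work sits, namely in Lemma \ref{lemma-Pj(n)}. Fix $k \geq 1$ and let $n_k^*$ be the integer produced by that lemma. If $n \geq n_k^*$ then $\mathcal{P}_k(n) \geq 0$, so part (i) of Theorem \ref{Th-equator-map-k>4} shows that $u^*$ is an energy minimizing extrinsic $k$-harmonic map; if $2k+1 \leq n < n_k^*$ then $\mathcal{P}_k(n) < 0$, so part (ii) of the same theorem shows that $u^*$ is unstable. Since Lemma \ref{lemma-Pj(n)} guarantees $n_k^* > 2k+1$, the unstable range is nonempty, and this is the whole content of the corollary once the lemma is granted.

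It therefore remains to sketch how I would prove Lemma \ref{lemma-Pj(n)}, which is the genuine obstacle. The first step is to simplify $\mathcal{P}_k(n)$ on the range $n \geq 2k+1$. The factor $A_k(n)$ of \eqref{def-A-j-n} has its roots exactly at $n = 2i-1$ for $i = 1, \dots, k$, all of which are $< 2k+1$; hence on $[2k+1, \infty)$ every factor $(n-2i+1)$ is positive and $A_k$ keeps the constant sign $(-1)^k$. Writing $B_k(n) := \prod_{i=1}^{k}(2i-1)(n-2i+1) = (-1)^k A_k(n) > 0$, the sign conventions built into \eqref{def-P-k-n} (a minus for even $k$, a plus for odd $k$) collapse uniformly to $\mathcal{P}_k(n) = \alpha_k(n) - B_k(n)$ for all $n \geq 2k+1$. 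Consequently the sign of $\mathcal{P}_k$ is governed by the ratio $R_k(n) := \alpha_k(n)/B_k(n)$, and $\mathcal{P}_k(n) \geq 0$ is equivalent to $R_k(n) \geq 1$.

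Two boundary facts are then routine. Evaluating at $n = 2k+1$, the factors $(n-2i+1) = 2(k+1-i)$ telescope to $2^k k!$, which multiplied by $\prod_{i=1}^k (2i-1) = (2k-1)!!$ gives $B_k(2k+1) = (2k)!$; thus $\mathcal{P}_k(2k+1) = \alpha_k(2k+1) - (2k)!$, and the claim \eqref{Pj(2j+1)<0} reduces to the single numerical inequality $\alpha_k(2k+1) < (2k)!$, which I would verify by writing out the explicit product $\alpha_k(2k+1)$ from \eqref{def-alpha-k} and estimating it against $(2k)!$. At the other end, $\alpha_k$ has degree $2k$ with positive leading coefficient $2^{-2k}$ while $B_k$ has degree $k$, so $R_k(n) \to +\infty$ as $n \to \infty$; in particular $\mathcal{P}_k(n) \geq 0$ for all large $n$.

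The hard part, and the heart of the lemma, is to rule out oscillation, i.e.\ to show that $\{n \geq 2k+1 : \mathcal{P}_k(n) \geq 0\}$ is a single half-line of integers rather than a union of intervals. Note that $\mathcal{P}_k$ itself is not monotone on $[2k+1,\infty)$ (for $k=2$ it first decreases and then increases), so one cannot argue directly with $\mathcal{P}_k$. The plan is instead to prove that the ratio $R_k$ is strictly increasing on $[2k+1,\infty)$; passing to the ratio is exactly what linearizes the single sign change. Concretely I would compute the logarithmic derivative $R_k'/R_k = \alpha_k'/\alpha_k - B_k'/B_k$ as a difference of two sums of positive reciprocals $\sum c/(n-a)$ and establish $\alpha_k'/\alpha_k > B_k'/B_k$ on the range: $\alpha_k$ contributes reciprocals of total weight $2k$ (counting the multiplicities of its squared factors) against the weight $k$ of $B_k$, so the inequality is dictated by the degree gap, though making it rigorous for every $k$ is the delicate step (a discrete variant, showing $R_k(n+1)/R_k(n) > 1$ as a product of elementary ratios, is an alternative). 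Granting strict monotonicity of $R_k$, the set where $R_k \geq 1$ is an upper half-line; defining $n_k^*$ as its least integer and combining with $R_k(2k+1) < 1$ yields $n_k^* > 2k+1$ and the stated dichotomy.
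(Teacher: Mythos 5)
Your first paragraph is exactly the paper's proof: the corollary is stated there as an immediate consequence of Theorem~\ref{Th-equator-map-k>4} and Lemma~\ref{lemma-Pj(n)}, and your deduction (including the observation that $n_k^*>2k+1$ makes the unstable range nonempty) is the whole argument. The remaining two paragraphs address Lemma~\ref{lemma-Pj(n)}, which the corollary is entitled to cite as already proved; for what it is worth, your ``discrete variant'' (showing the ratio $\alpha_k/B_k$ increases from one integer to the next) is precisely the paper's method via the factors $\gamma_i>\beta_i$, whereas your sketch leaves the two essential verifications --- the numerical inequality $\alpha_k(2k+1)<(2k)!$ and the monotonicity of $R_k$ --- as unproved claims, so it would not stand alone as a proof of the lemma.
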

\begin{remark} \label{Rem-n*k}
The degree of the polynomials $\mathcal{P}_k(n)$ is $2k$. Therefore, finding a general formula to compute $n_k^*$ seems to be a very difficult task because one has to compare the constants $\alpha_k(n)$ with that of type $A_{k}(n)$, and the $\alpha_k(n)$'s factor into the product of terms of the type $(n\pm 2j)$, while the $A_{k}(n)$'s are built up with terms of the form $(n-2i+1)$ or $(2i-1)$. 

Nevertheless, using the method of the proof of \eqref{Pj(2j+1)<0}, it is easy to check that $n_k^*< 4(k+1)$ for all $k\geq1$. This upper bound for $n_k^*$ is not sharp at all, but it enables us to write a simple computer algorithm which yields the value of $n_k^*$ when $k$ varies in a prescribed range (see Appendix\link\ref{Appendix}).

For instance, the following table, which includes Theorems\link\ref{Th-Jager-Kaul}, \ref{Th-equator-map-Hong} as well as Corollaries\link\ref{Cor-equator-map-k=3}, \ref{Cor-equator-map-k=4} as special cases, reports the values of $n_k^*$ for $1 \leq k \leq 40$:
\begin{equation}\label{table-n-j}
\begin{array}{lllll}
n_1^*=7 &n_2^*=10 & n_3^*= 12&n_4^*= 15&n_5^*=17  \\
n_6^*=19&n_7^*= 21&n_8^*=24 &n_9^*=26 &n_{10}^*= 28\\
n_{11}^*=30 &n_{12}^*=32 & n_{13}^*= 34&n_{14}^*= 36&n_{15}^*=39  \\
n_{16}^*=41 &n_{17}^*=43 & n_{18}^*= 45&n_{19}^*= 47&n_{20}^*=49  \\
n_{21}^*=51 &n_{22}^*=53 & n_{23}^*= 55&n_{24}^*= 57&n_{25}^*=59  \\
n_{26}^*=62&n_{27}^*=64 & n_{28}^*= 66&n_{29}^*= 68&n_{30}^*=70  \\
n_{31}^*=72 &n_{32}^*=74 & n_{33}^*= 76&n_{34}^*= 78&n_{35}^*=80  \\
n_{36}^*=82 &n_{37}^*=84 & n_{38}^*= 86&n_{39}^*= 88&n_{40}^*=90 \,.
\end{array}
\end{equation}
Inspection of \eqref{table-n-j} suggests that either $n_{k+1}^*= n_k^*+2$ (most of the occurrences) or $n_{k+1}^*= n_k^*+3$. However, the dynamic of this progression is still unclear. Of course, using our computer algorithm we can also determine $n_k^*$ for large values of $k$. For instance, we have
\begin{equation}\label{table-n-j-large}
\begin{array}{llll}

n_{2000}^*=4019 &n_{2001}^*=4021 & n_{2002}^*=4023&n_{2003}^*= 4025 \\
n_{2004}^*=4027 &n_{2005}^*=4029 & n_{2006}^*= 4031&n_{2007}^*= 4033 \\
&&& \\
n_{4000}^*=8021 &n_{4001}^*=8023 & n_{4002}^*= 8025&n_{4003}^*= 8027 \\
n_{4004}^*=8029 &n_{4005}^*=8031 & n_{4006}^*= 8033&n_{4007}^*= 8035 \\
&&&\\
n_{6000}^*=12022 &n_{6001}^*=12024 & & \\
n_{8000}^*=16023 &n_{8001}^*=16025 & & \\
n_{10000}^*=20023 &n_{10001}^*=20025 & & \\
n_{20000}^*=40025 &n_{20001}^*=40027\,. & & \\
\end{array}
\end{equation}
Inspection of \eqref{table-n-j-large} suggests that there is at least one instance of $n_{k+1}^*\geq n_k^*+3$ for $2000 \leq k \leq 4000$. Indeed, since  $n_{2000}^*=4019$ and $n_{4000}^*=8021$, it follows that $n_{k+1} \leq n_k +2$ for all $2000 \leq k \leq 4000$ would give a contradiction. Similarly, we have at least one instance of $n_{k+1}^*\geq n_k^*+3$ for both $4000 \leq k \leq 6000$ and $6000 \leq k \leq 8000$; possibly none for $8000 \leq k \leq 10000$, although some $n_{k+1}^*\geq n_k^*+3$ appears when $10000 \leq k \leq 20000$. 
We were not able to detect any case where $n_{k+1}^*= n_k^*+1$. 
\end{remark}
\vspace{2mm}

The proofs of our results shall be carried out in Section\link\ref{proofs}. An important novelty in our method is the use of a family of general higher order Hardy inequalities which enable us to provide a unified treatment that includes the classical cases $k = 1, 2$ as special instances.

\section{Proofs of the results}\label{proofs}
First, we state a general result which could prove useful also for further studies on the high order extrinsic energies \eqref{2s-energia-ext}, \eqref{2s+1-energia-ext}.
\begin{proposition}\label{Th-E-L-equation-k-energy-extr}
Let $(M,g)$ denote a compact Riemannian manifold. Assume that $k \geq 2$ and let $u \in W^{k,2}\left ( M,\s^n \right )$. Then $u$ is an extrinsic weakly $k$-harmonic map if and only if   
\begin{equation}\label{Euler-Lag-equation-k-energy-ext}
\Delta^k u + \lambda_k u=0 
\end{equation}
in the sense of the distributions. Moreover, if \eqref{Euler-Lag-equation-k-energy-ext} holds, then
\begin{equation}\label{lambda-explicit-general}
\lambda_k= \Delta^{k-1}\left ( |\nabla u|^2\right )+ \sum_{j=0}^{k-2}\Delta^j \left ( \langle \Delta^{k-1-j}u,\Delta u \rangle \right )
+2\sum_{j=0}^{k-2}\Delta^j \left ( \nabla \Delta^{k-1-j}u . \nabla u  \right )
\end{equation}
(note that, here and below, $\Delta^0 u=u$).
\end{proposition}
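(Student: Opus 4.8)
The plan is to derive the weak Euler--Lagrange equation from the first variation, then to compute the multiplier $\lambda_k$ explicitly by iterating the product rule for the Laplacian against the constraint $|u|^2=1$.

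First I would record the admissible infinitesimal variations. Since $\Pi(y)=y/|y|$ near $\s^n$, its differential at a point $u\in\s^n$ is the orthogonal projection onto $T_u\s^n$, so that $\psi:=\frac{d}{dt}u_t\big|_{t=0}=D\Pi(u)[\phi]=\phi-\langle\phi,u\rangle u$; as $\phi$ ranges over $C_0^\infty(M,\R^{n+1})$ the field $\psi$ ranges over all tangential variations of the form $\phi-\langle\phi,u\rangle u$. Here I would record that $|u|\equiv 1$ gives $u\in W^{k,2}\cap L^\infty$, so that all products appearing below are controlled: although $W^{k,2}$ itself is not an algebra when $n\geq 2k+1$ (then $k<n/2$), the space $W^{k,2}\cap L^\infty$ is, and Gagliardo--Nirenberg interpolation bounds the intermediate-order products. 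This guarantees $\psi\in W^{k,2}$ with compact support and that every term occurring later is a well-defined distribution.

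Next I would compute the first variation. Differentiating under the integral sign and integrating by parts (self-adjointness of $\Delta$; no boundary terms, since $\phi$ has compact support or $M$ is closed), both parities collapse to the same pairing: for $k=2s$ one obtains $2\int_M\langle\Delta^s u,\Delta^s\psi\rangle\,dx=2\langle\Delta^k u,\psi\rangle$, and for $k=2s+1$ one obtains $2\int_M\langle\nabla\Delta^s u,\nabla\Delta^s\psi\rangle\,dx=-2\langle\Delta^k u,\psi\rangle$, where the pairing is the distributional one between $\Delta^k u\in W^{-k,2}$ and $\psi$. Hence $u$ is weakly $k$-harmonic if and only if $\langle\Delta^k u,\phi-\langle\phi,u\rangle u\rangle=0$ for all $\phi$; equivalently, the tangential part of $\Delta^k u$ vanishes, i.e. $\Delta^k u+\lambda_k u=0$ distributionally with $\lambda_k=-\langle\Delta^k u,u\rangle$. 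To stay rigorous one avoids pairing $\Delta^k u$ with the merely $W^{k,2}$ map $u$ directly, reading the condition instead as: for every scalar test object $g=\langle\phi,u\rangle$ one has $\langle\Delta^k u,gu\rangle=-\langle\lambda_k,g\rangle$, with $\lambda_k$ the explicit distribution constructed next.

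For the ``Moreover'' part I would obtain the closed form of $\lambda_k=-\langle\Delta^k u,u\rangle$ by iterating the Leibniz identity $\Delta\langle a,b\rangle=\langle\Delta a,b\rangle+2\,\nabla a\cdot\nabla b+\langle a,\Delta b\rangle$, where $\cdot$ denotes the product \eqref{def-scalar-prod}. Setting $T_m:=\langle\Delta^m u,u\rangle$ and applying the identity with $a=\Delta^{m-1}u$, $b=u$ yields the recursion
\[
T_m=\Delta T_{m-1}-2\,\nabla\Delta^{m-1}u\cdot\nabla u-\langle\Delta^{m-1}u,\Delta u\rangle ,
\]
with $T_0=\langle u,u\rangle=1$ and, from $\Delta|u|^2=0$, $T_1=-|\nabla u|^2$. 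Unfolding from $m=k$ down to $m=1$ and reindexing by $j=k-m'$ gives
\[
T_k=-\Delta^{k-1}\!\left(|\nabla u|^2\right)-2\sum_{j=0}^{k-2}\Delta^j\!\left(\nabla\Delta^{k-1-j}u\cdot\nabla u\right)-\sum_{j=0}^{k-2}\Delta^j\!\left(\langle\Delta^{k-1-j}u,\Delta u\rangle\right),
\]
so that $\lambda_k=-T_k$ is precisely \eqref{lambda-explicit-general}; the case $k=2$ recovers the known expression for $\lambda_2$. The converse is then immediate: if $\Delta^k u+\lambda_k u=0$, the first variation equals $\pm 2\langle-\lambda_k u,\psi\rangle=\mp 2\langle\lambda_k,\langle u,\psi\rangle\rangle=0$, since $\langle u,\psi\rangle=\langle u,\phi\rangle-\langle\phi,u\rangle|u|^2=0$ pointwise.

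The main obstacle I expect is not the algebra of the recursion but its justification at Sobolev regularity: the quantities $\Delta^{k-1}(|\nabla u|^2)$, $\Delta^j\langle\Delta^{k-1-j}u,\Delta u\rangle$ and $\Delta^j(\nabla\Delta^{k-1-j}u\cdot\nabla u)$ must be shown to be genuine distributions for $u\in W^{k,2}\cap L^\infty$, and the Leibniz identity together with the integrations by parts must be validated in the weak sense (for instance by mollification respecting $|u|=1$, or by a direct duality bookkeeping using Gagliardo--Nirenberg to place each factor in a dual pair of Lebesgue--Sobolev spaces). Carrying out this bookkeeping uniformly in $k$ is the delicate point; once it is in place, the formal computation above delivers both the equation and the explicit multiplier.
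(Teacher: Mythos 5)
Your argument is correct and follows essentially the same route as the paper: the equation $\Delta^k u+\lambda_k u=0$ is obtained as the statement that $\Delta^k u$ has vanishing tangential part (the paper delegates this to \cite[Proposition~2.1]{MR2054520}, whereas you spell out the first variation via $D\Pi(u)[\phi]=\phi-\langle\phi,u\rangle u$), and the formula for $\lambda_k$ comes from iterating the Laplacian product rule on $\langle \Delta^m u,u\rangle$ together with $\langle\Delta u,u\rangle=-|\nabla u|^2$. Your recursion $T_m=\Delta T_{m-1}-2\,\nabla\Delta^{m-1}u\cdot\nabla u-\langle\Delta^{m-1}u,\Delta u\rangle$ is exactly the pointwise form of the paper's $k-1$ successive integrations by parts against $\Delta^j\varphi$, so the two computations coincide term by term.
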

Proposition\link\ref{Th-E-L-equation-k-energy-extr} is a straightforward extension of Proposition 1.1 of \cite{MR1692148}. Therefore, in order to preserve the natural flow of the exposition, we postpone its proof to the final part of this section.
\begin{proof}[Proof of Proposition\link\ref{prop-eq-map}]
Let $r=|x|$ and $u:B^n \backslash \{ O\} \to \s^n \hookrightarrow \R^{n+1}$ be a map of the following form:
\begin{equation}\label{general-map}
   x= \left( x_1, \ldots,x_n\right) \mapsto 
   \left( p(r) \, x, 0\right) =\left( p(r) \, x_1, \ldots,p(r)\,x_n, 0\right) \,\,,
\end{equation}
where $p(r)$ is any smooth function for $r>0$. Then a simple computation yields:
\begin{equation}\label{formule-lemma-secondo}
\begin{split}
\nabla  u &=\left ( \frac{p'\,x_1}{r}
\begin{bmatrix} x_1 \\ 
x_2 \\
 \vdots \\
 x_n \\
\end{bmatrix}
+
\begin{bmatrix}
p\\
0\\
\vdots\\
0
\end{bmatrix},
\frac{p'\,x_2}{r} 
\begin{bmatrix}
 x_1 \\ 
x_2 \\
 \vdots \\
 x_n \\
\end{bmatrix}
+
\begin{bmatrix}
0\\
p\\
\vdots\\
0
\end{bmatrix}, \ldots, \frac{p'\,x_n}{r}\begin{bmatrix}
x_1 \\ 
x_2 \\
 \vdots \\
 x_n \\
\end{bmatrix}+
\begin{bmatrix}
0\\
0\\
\vdots\\
p
\end{bmatrix}, \,\begin{bmatrix} 
0 \\ 
0 \\
 \vdots \\
 0 \\
\end{bmatrix} \right )\\ 
|\nabla  u |^2&= r^2 \,p'^2 +n\, p^2+2r\,  p\,p' \\
\Delta u &= \Big( \left [p''+\frac{n+1}{r}\,p' \right ] \, x, 0\Big )\\ 
| \Delta u|^2 &=r^2\,  \left [p''+\frac{n+1}{r}\,p' \right ]^2\,. \\ 
\end{split}
\end{equation}
Now, we observe that the equator map $u^*$ is a map of type \eqref{general-map} with $p(r)=1/r $. Then, using \eqref{formule-lemma-secondo} together with a routine induction argument we deduce that 
\begin{equation}\label{Delta-j-equator-explicit}
\left (\Delta^k u^* \right)(x)=\left(\frac{A_{k}(n)}{r^{2k+1}} \,\, x,0\right ) \,,\qquad (k \geq1 )\,,
\end{equation}
where $A_{k}(n)$ is the constant defined in \eqref{def-A-j-n}. Next, using \eqref{formule-lemma-secondo} with
\[
p(r)=\frac{A_{s}(n)}{r^{2s+1}} 
\]
we obtain:
\begin{equation}\label{Delta-nabla-equator}
\begin{split}
|\Delta^s u^*|^2&= A_{s}^2(n)\,\, \frac{1}{r^{4s}} \\
|\nabla\Delta^s u^*|^2&= A_{s}^2(n)\,\,\frac{n+4 s^2 -1}{r^{4s+2}}\,.
\end{split}
\end{equation}
It is easy to deduce from \eqref{Delta-nabla-equator} that
\begin{equation*}\label{equator-belongs-Sobolev}
u^*:B^n \to \s^n \in W^{k,2}\left(B^n,\s^n \right ) \quad {\rm iff} \quad n \geq 2k+1 \,.
\end{equation*}
Moreover, we deduce from \eqref{Delta-j-equator-explicit} that
\begin{equation*}\label{Delta-j-equator-multiple-equator}
\Delta^k u^*=\frac{A_{k}(n)}{r^{2k}} \, u^*\,. 
\end{equation*}
Then $u^*$ satisfies \eqref{Euler-Lag-equation-k-energy-ext} strongly on $B^n$ except at the origin, so it is a weak critical point and the proof of Proposition\link\ref{prop-eq-map} is completed.
\end{proof}

An important ingredient in the proof of Theorem\link\ref{Th-equator-map-k>4} are the following high order stability inequalities for the equator map. To shorten the notation, when the meaning is clear, we shall use ``$.$'' instead of $\langle\,\,,\,\rangle$.
\begin{proposition}\label{Th-stab-inequality-general} \quad
\begin{itemize}
\item[{\rm (i)}] Let $k=2s$. If $u^*:B^n\to \s^n$ is stable for the extrinsic $k$-energy $E_{k}^{{\rm ext}}(u)$, then
\begin{equation}\label{stab-ineq-2k}
 \int_{B^n}  | \Delta^s \phi| ^2 -\left ( \Delta^{2s} u^* . u^* \right ) |\phi|^2 
   \, dx \geq 0   
\end{equation}
for all $\phi \in C^{\infty}_0\left (B^n,\R^{n+1} \right ) $.
\item[{\rm (ii)}] Let $k=2s+1$. If $u^*:B^n\to \s^n$ is stable for the extrinsic $k$-energy $E_{k}^{{\rm ext}}(u)$, then
\begin{equation}\label{stab-ineq-2k+1}
 \int_{B^n}  |  \nabla  \Delta^s \phi| ^2 +\left ( \Delta^{2s+1} u^* . u^* \right ) |\phi|^2    \,dx \geq 0
\end{equation}
for all $\phi \in C^{\infty}_0\left (B^n,\R^{n+1} \right ) $.
\end{itemize}
\end{proposition}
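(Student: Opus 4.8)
The plan is to read off the stability condition as the nonnegativity of the second variation of $E_k^{\rm ext}$ at $u^*$ along the projected variations $u_t=\Pi(u^*+t\phi)$, and to exploit the fact that $u^*$ has vanishing last component. First I would record that, since $|u^*|\equiv 1$, one has $|u^*+t\phi|^2=1+2t\langle u^*,\phi\rangle+t^2|\phi|^2\ge 1$ for suitable $t$, so the argument of $\Pi$ never reaches the origin of $\R^{n+1}$ and the curve $t\mapsto u_t$ is a genuine smooth curve of $W^{k,2}$ maps near $t=0$. Differentiating $u_t=(u^*+t\phi)/|u^*+t\phi|$ twice yields $\dot u_0=\phi-\langle u^*,\phi\rangle\,u^*$ in general, and, as soon as $\langle u^*,\phi\rangle\equiv 0$, the two clean identities $\dot u_0=\phi$ and $\ddot u_0=-|\phi|^2\,u^*$. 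This orthogonality is the structural simplification I would build the whole argument around.

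Next I would insert these into the second variation. In the even case $k=2s$, differentiating $E_{2s}^{\rm ext}(u_t)=\int_{B^n}|\Delta^s u_t|^2\,dx$ twice gives
\[
\tfrac12\left.\tfrac{d^2}{dt^2}E_{2s}^{\rm ext}(u_t)\right|_{t=0}=\int_{B^n}|\Delta^s\dot u_0|^2\,dx+\int_{B^n}\langle\Delta^s u^*,\Delta^s\ddot u_0\rangle\,dx.
\]
Integrating the second term by parts to move all $2s$ Laplacians onto $u^*$, and invoking \eqref{Delta-j-equator-explicit} in the form $\Delta^{2s}u^*=(A_{2s}(n)/r^{4s})\,u^*$ (so that $\Delta^{2s}u^*\cdot u^*=A_{2s}(n)/r^{4s}$), the acceleration term collapses to $-\int_{B^n}(\Delta^{2s}u^*\cdot u^*)\,|\phi|^2\,dx$, producing exactly the integrand of \eqref{stab-ineq-2k}. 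The odd case $k=2s+1$ is identical except that the acceleration pairing $\int_{B^n}\nabla\Delta^s u^*\cdot\nabla\Delta^s\ddot u_0\,dx$ requires one extra integration by parts, which yields $-\int_{B^n}\Delta^{2s+1}u^*\cdot\ddot u_0\,dx$ and hence the opposite sign $+(\Delta^{2s+1}u^*\cdot u^*)|\phi|^2$ matching \eqref{stab-ineq-2k+1}.

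The final step is to pass from orthogonal variations to arbitrary $\phi$. Here I would use that $u^*=(x/|x|,0)$ has zero last coordinate, so the vertical field $\phi=\psi\,e_{n+1}$, with $e_{n+1}=(0,\dots,0,1)$ and $\psi\in C_0^\infty(B^n)$ a scalar, automatically satisfies $\langle u^*,\phi\rangle\equiv 0$, while $|\Delta^s\phi|^2=(\Delta^s\psi)^2$ and $|\phi|^2=\psi^2$. Stability then delivers the scalar inequalities $\int_{B^n}(\Delta^s\psi)^2-(\Delta^{2s}u^*\cdot u^*)\psi^2\,dx\ge 0$ and $\int_{B^n}|\nabla\Delta^s\psi|^2+(\Delta^{2s+1}u^*\cdot u^*)\psi^2\,dx\ge 0$ for every scalar $\psi$. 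Applying these to each of the $n+1$ components of an arbitrary $\phi\in C_0^\infty(B^n,\R^{n+1})$ and summing, using $|\Delta^s\phi|^2=\sum_j(\Delta^s\phi^j)^2$ and $|\phi|^2=\sum_j(\phi^j)^2$, yields \eqref{stab-ineq-2k} and \eqref{stab-ineq-2k+1} for all vector fields $\phi$.

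The main obstacle I anticipate is the rigorous justification of the integrations by parts and of differentiating under the integral sign, since $\Delta^{2s}u^*$ and $\Delta^{2s+1}u^*$ blow up like $r^{-4s}$ and $r^{-4s-2}$ at the origin. This is precisely where the hypothesis $n\ge 2k+1$ is used: by Proposition~\ref{prop-eq-map} it guarantees $u^*\in W^{k,2}(B^n,\s^n)$, and it makes $\int_{B^n}r^{-2k}|\psi|^2\,dx$ convergent for $\psi\in C_0^\infty(B^n)$, so every integral written above is finite. Moreover the variations $u_t$ coincide with $u^*$ outside a fixed compact subset of $B^n$, so no boundary contributions arise, and the estimate $|u^*+t\phi|\ge 1$ keeps $t\mapsto E_k^{\rm ext}(u_t)$ genuinely of class $C^2$. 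Making these convergence and boundary-term arguments precise, for instance by a cut-off near the origin and a limiting argument, is the delicate part; the algebra of the second variation itself is routine once $\dot u_0$ and $\ddot u_0$ are in hand.
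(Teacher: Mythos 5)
Your proposal is correct and follows essentially the same route as the paper: both use the normalized variation $u_t=(u^*+t\phi_\eta)/\sqrt{1+t^2\eta^2}$ along vertical fields $\phi_\eta=(0,\dots,0,\eta)$ (which is exactly $\Pi(u^*+t\phi_\eta)$, since $\langle u^*,\phi_\eta\rangle\equiv 0$), compute $\dot u_0=\phi_\eta$ and $\ddot u_0=-\eta^2u^*$, integrate by parts via the Green identity to replace $\Delta^su^*\cdot\Delta^s(u^*\eta^2)$ by $(\Delta^{2s}u^*\cdot u^*)\eta^2$, and then take $\eta=\phi^i$ and sum over the $n+1$ components. Your attention to the convergence of the integrals near the origin under $n\ge 2k+1$ is a reasonable extra precaution that the paper leaves implicit.
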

\begin{proof} (i) First, we consider variations of the type
\begin{equation*}\label{def-ut}
u_t={u^*+t \phi_{\eta}\over \sqrt{1+t^2\,\eta^2} }\,,
\end{equation*} 
where $ \phi_{\eta} = (0,..,0, \eta) \in C^{\infty}_0(B^n, \R^{n+1})$, with $\eta \in C^{\infty}_0\left (B^n,\R \right ) $. The variation $u_t$ is smooth except at the origin. So, we deduce that on $B^n \setminus \{O\}$
\begin{equation}  \label{1.derivative}
\left .{d \over dt}( \Delta^s{u_t})\right.\Big|_{t=0}=  \Delta^s\left(\left .{d \over dt}(u_t) \right)\right.\Big|_{t=0}= \Delta^s\left(\left .{d \over dt}(u_t)\right.\Big|_{t=0} \right)=  \Delta^s \phi_{\eta}
 \end{equation}
and 
\begin{equation} \label{2.derivative}
\left .{d^2 \over dt^2}( \Delta^s{u_t})\right.\Big|_{t=0}=  \Delta^s\left(\left .{d^2 \over dt^2}(u_t) \right)\right.\Big|_{t=0}=  \Delta^s\left(\left .{d^2 \over dt^2}(u_t)\right.\Big|_{t=0} \right)=  -\,  \Delta^{s} (u^* \,\eta^2) \,.
 \end{equation}
Now, $u^*$ is stable with respect to the variations $u_t$ if 
\begin{equation*} 
\left . {d^2 \over dt^2}\Big ( E_{k}^{{\rm ext}}(u_t)\Big) \right.\Big|_{t=0} \geq 0 \,.
\end{equation*}
But 
\begin{equation}  \label{2.variation} 
\left . {d^2 \over dt^2}\Big ( E_{k}^{{\rm ext}}(u_t)\Big) \right. \Big|_{t=0} = 2 \int_{B^n} \left . {d^2 \over dt^2} \left(\Delta^s(u_t)\right)\right.\Big|_{t=0} . \left .\left(\Delta^s u_t\right )\right. \Big|_{t=0}  \, dx \, +   2 \int_{B^n}  \left\vert {d \over dt} \left .\left(\Delta^s(u_t)\right)\right. \Big|_{t=0} \right\vert^2 \, dx \,.
\end{equation}
Substituting \eqref{1.derivative} and \eqref{2.derivative} into \eqref{2.variation} we get 
\begin{equation*}  \int_{B^n} \big [ | \Delta^s \eta \vert^2 - \Delta^s u^* . \Delta ^{s} (u^* \, \eta^2) 
   \big ]\, dx \geq 0   
\end{equation*}
for any arbitrary smooth function  $\eta  \in C^{\infty}_0\left (B^n,\R \right ) $. 
Next, let  $\phi=(\phi^1,\ldots,\phi^{n+1}) \in C^{\infty}_0\left (B^n,\R^{n+1} \right ) $ and choose $\eta  = \phi^i$ in the above inequality. Summing over $i$ from $1 $ to $n + 1$ , if $u^* $ is stable it follows that
\begin{equation}\label{88}
 \int_{B^n} \big [ | \Delta^s \phi| ^2 - \Delta^s u^* . \Delta ^{s} (u^*  |\phi|^2) 
   \big ]\, dx \geq 0   
\end{equation}
for all $\phi \in C^{\infty} _0\left (B^n,\R^{n+1} \right ) $. Now \eqref{stab-ineq-2k} can be deduced easily from \eqref{88} using the Green identity. The proof of the inequality \eqref{stab-ineq-2k+1} is analogous and so we omit it.
\end{proof}
\vspace{2mm}
In the following lemma we show that an extended version of the stability inequality implies the energy minimizing property. More precisely, we have:
\begin{lemma}\label{prop-stable-imply-minimising}\quad
\begin{itemize}
\item[{\rm (i)}] Let $k=2s$ and $n\geq 2k+1$. If
\begin{equation}\label{stab-ineq-2k-bis}
 \int_{B^n}  | \Delta^s \phi| ^2 - \left (\Delta^{2s} u^* . u^*  \right )\,|\phi|^2
   \, dx \geq 0   
\end{equation}
for all $\phi \in W^{k,2}_0\left (B^n,\R^{n+1} \right ) $, then $u^*$ is energy minimizing for the extrinsic $k$-energy.
\item[{\rm (ii)}] Let $k=2s+1$ and $n\geq 2k+1$. If
\begin{equation}\label{stab-ineq-2k+1-bis}
 \int_{B^n}  |  \nabla  \Delta^s \phi| ^2 +\left ( \Delta^{2s+1} u^* . u^* \right )\,|\phi|^2   \,dx \geq 0
\end{equation}
for all $\phi \in W^{k,2}_0\left (B^n,\R^{n+1} \right ) $, then $u^*$ is energy minimizing for the extrinsic $k$-energy.
\end{itemize}
\end{lemma}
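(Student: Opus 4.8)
The plan is to estimate directly the energy of an arbitrary competitor and to show that the difference of energies is precisely the left-hand side of the hypothesis. Fix $v \in W^{k,2}(B^n,\s^n)$ with $v-u^* \in W^{k,2}_0(B^n,\R^{n+1})$ and set $\phi = v-u^* \in W^{k,2}_0(B^n,\R^{n+1})$. The first, purely algebraic, observation is that the sphere constraint on both $u^*$ and $v$ forces a pointwise identity: since $|u^*|^2 = |v|^2 = 1$ a.e., expanding $|u^*+\phi|^2 = 1$ gives
\begin{equation*}
\langle u^*, \phi\rangle = -\tfrac{1}{2}\,|\phi|^2 \qquad \text{a.e. on } B^n \, .
\end{equation*}
This is the only place where the target constraint enters, and it replaces the linear cross term appearing in the energy expansion by a quadratic one; this is exactly the mechanism that turns the stability hypothesis into the minimality conclusion.

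Next I would expand the energy. In the case $k=2s$ (part (i)), bilinearity of $\Delta^s$ yields
\begin{equation*}
E_k^{\rm ext}(v) = E_k^{\rm ext}(u^*) + 2\int_{B^n} \langle \Delta^s u^*, \Delta^s \phi\rangle\, dx + \int_{B^n} |\Delta^s \phi|^2\, dx \, ,
\end{equation*}
and I would treat the cross term by Green's identity, moving the $s$ Laplacians off $\phi$ and onto $u^*$ to produce $\int_{B^n}\langle \Delta^{2s}u^*, \phi\rangle\,dx$. Here I invoke the relation $\Delta^k u^* = (A_k(n)/r^{2k})\,u^*$ established in the proof of Proposition \ref{prop-eq-map}: since $\Delta^{2s}u^*$ is a pointwise scalar multiple of $u^*$, we have $\langle \Delta^{2s}u^*, \phi\rangle = (\Delta^{2s}u^*\cdot u^*)\,\langle u^*,\phi\rangle$, and the constraint identity converts this into $-\tfrac{1}{2}(\Delta^{2s}u^*\cdot u^*)|\phi|^2$. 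Collecting terms gives exactly
\begin{equation*}
E_k^{\rm ext}(v) - E_k^{\rm ext}(u^*) = \int_{B^n} |\Delta^s \phi|^2 - (\Delta^{2s}u^*\cdot u^*)\,|\phi|^2\, dx \, ,
\end{equation*}
whose right-hand side is nonnegative by the hypothesis \eqref{stab-ineq-2k-bis}, so $u^*$ is minimizing. Part (ii) is formally identical: for $k=2s+1$ the cross term is $2\int_{B^n}\langle\nabla\Delta^s u^*,\nabla\Delta^s\phi\rangle\,dx$; one integration by parts on the gradient introduces a sign, then $s$ further Laplacians land on $u^*$, giving $-2\int_{B^n}\langle\Delta^{2s+1}u^*,\phi\rangle\,dx$, and the same constraint-plus-eigenvalue computation reproduces the left-hand side of \eqref{stab-ineq-2k+1-bis}, the extra sign from the single $\nabla$-integration by parts being responsible for the $+$ in front of the potential term.

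The main obstacle I anticipate is analytic rather than algebraic: justifying the integrations by parts in the cross term. The fields $\Delta^s u^*$ and $\Delta^{2s}u^*$ are singular at the origin, blowing up like $r^{-2s}$ and $r^{-4s}=r^{-2k}$ respectively, and $\phi$ is merely a $W^{k,2}_0$ map rather than a smooth one. I would handle this in two stages: first approximate $\phi$ by maps in $C^\infty_0(B^n,\R^{n+1})$, using density in $W^{k,2}_0$, and then carry out each application of Green's identity on the punctured domain $B^n\setminus B_\varepsilon(O)$, verifying that the boundary contributions on $\partial B_\varepsilon(O)$ vanish as $\varepsilon\to 0$. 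The growth rates above, the assumption $n\geq 2k+1$ that guarantees $u^*\in W^{k,2}$, and the finiteness of $\int_{B^n} r^{-2k}|\phi|^2\,dx$ (which follows from the relevant higher order Hardy inequality and is implicit in the very meaning of \eqref{stab-ineq-2k-bis}, \eqref{stab-ineq-2k+1-bis}) are precisely what make these boundary terms negligible and the potential integral finite. Once this limiting argument is in place, the pointwise constraint identity together with $\Delta^k u^* = (A_k(n)/r^{2k})u^*$ delivers the conclusion with no further computation.
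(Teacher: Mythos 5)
Your argument is correct and is essentially the paper's own proof in a slightly reorganized form: the paper tests the pointwise identity $\Delta^{2s}u^*=(\Delta^{2s}u^*\cdot u^*)\,u^*$ against $\phi=u^*-v$ and combines the result with the stability inequality applied to the same $\phi$, using $|u^*-v|^2=2-2\,u^*\cdot v$, which is exactly your constraint identity $\langle u^*,\phi\rangle=-\tfrac12|\phi|^2$. Your direct expansion of $E_k^{\rm ext}(v)-E_k^{\rm ext}(u^*)$ into the stability quadratic form is the same computation, with the same Green-identity justification near the singularity.
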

\begin{proof} (i) We must show that
\begin{equation}\label{eq-minimising-2k}  
E_{k}^{{\rm ext}}(u^*) \leq E_{k}^{{\rm ext}} (v) 
\end{equation}
for all $v \in W^{k,2}(B^n,\s^n)$ such that $u^*-v \in  W^{k,2}_0(B^n,\R^{n+1})$. 

On $B^n \setminus \{O\}$ the equator map $u^*$ satisfies 
\begin{equation}\label{u-*-equaz} 
\Delta^{2s}u^* = \left ( \Delta^{2s}u^* .u^* \right ) u^* 
\end {equation}
strongly. Thus we can multiply both sides of \eqref{u-*-equaz} by $\phi \in W^{k,2}_0(B^n,\R^{n+1})$ and we obtain:
\begin{equation*} 
\int_{B^n} \Delta^s u^*  . \Delta^s \phi \ dx = \int_{B^n}\left  ( \Delta^{2s}u^* .u^* \right ) u^* . \phi\, dx 
\end{equation*} 
Choosing $\phi = u^* -v$  we have
\[
\int_{B^n} \Delta^s u^*  . \Delta^s u^* \ dx -\int_{B^n} \Delta^s u^*  . \Delta^s v \ dx =\int_{B^n}\left  ( \Delta^{2s}u^* .u^* \right )\, dx-\int_{B^n}\left  ( \Delta^{2s}u^* .u^* \right ) u^* . v\, dx   
\]
from which we deduce
\begin{equation} \label{eq-min-1}
\int_{B^n} \Delta^s u^* \ . \Delta^s v \ dx = \int_{B^n}\left  ( \Delta^{2s}u^* .u^* \right ) u^* . v\, dx   \,.
\end{equation} 
Next, we apply the hypothesis \eqref{stab-ineq-2k-bis} with $\phi = u^* -v$. Since $u^*,v$  have values in $\s^n$ we have
\[
\left |u^* -v \right |^2=2-2\,u^*.v
\]
and so we easily find
\begin{equation} \label{eq-min-2}
\int_{B^n} \mid \Delta^s v \mid^2  \ dx - \int_{B^n} \mid \Delta^s u^* \mid^2 \ dx   - 2 \int_{B^n} \Delta^s u^* \ . \Delta^s v \ dx \ + 2  \int_{B^n}\left  ( \Delta^{2s}u^* .u^* \right ) u^* . v\, dx   \geq 0 \,.
\end{equation} 
Combining \eqref{eq-min-1} and \eqref{eq-min-2} we obtain precisely \eqref{eq-minimising-2k}. The proof of part (ii) is analogous and so we omit the details.
\end{proof}
Next, we recall the following well-known higher order Hardy inequalities (see, for instance,  \cite{MR2215561}):
\begin{theorem}[\textbf{Hardy Inequalities}]\label{Th-Hardy-ineq} Let $\alpha_k(n)$ be the constants defined in \eqref{def-alpha-k}. 
\begin{itemize}
\item[(i)] Let $k=2s$ and $n\geq 2k+1$. Then
\begin{equation}\label{Hardy-ineq-2k}
 \int_{B^n}  | \Delta^s \phi| ^2\,dx \geq \alpha_{2s}(n)\int_{B^n}  \frac{|\phi| ^2}{|x|^{4s}}\,dx \quad\quad  (s \geq 1)\,.
 \end{equation}
for all $\phi \in W^{k,2}_0(B^n,\R^{n+1})$.
 \item[(ii)] Let $k=2s+1$ and $n\geq 2k+1$. Then
 \begin{equation}\label{Hardy-ineq-2k+1}
 \int_{B^n}  | \nabla\Delta^s \phi| ^2\,dx \geq\alpha_{2s+1}(n)\int_{B^n}  \frac{|\phi| ^2}{|x|^{4s+2}}\,dx \quad (s \geq 0)\,.
  \end{equation}
for all $\phi \in W^{k,2}_0(B^n,\R^{n+1})$.
\end{itemize}
Moreover, the constants $\alpha_k(n)$ in \eqref{Hardy-ineq-2k} and \eqref{Hardy-ineq-2k+1} are the best possible, i.e., by density of $ C^{\infty}_0(B^n,\R^{n+1})$ in $ W^{k,2}_0(B^n,\R^{n+1})$,
\begin{equation}\label{Hardy-best-constant}
\begin{split}
 \alpha_{2s}(n)&= {\rm Inf}  \,\, \left \{ \frac{\displaystyle{\int_{B^n}  | \Delta^s \phi| ^2\,dx} }{\displaystyle{\int_{B^n}  \frac{|\phi| ^2}{|x|^{4s}}\,dx} }\,\, \colon \phi \in C^{\infty}_0(B^n,\R^{n+1}) \right \}\\
 \alpha_{2s+1}(n)&= {\rm Inf}  \,\, \left \{ \frac{\displaystyle{\int_{B^n}  | \nabla\Delta^s \phi| ^2\,dx }}{\displaystyle{\int_{B^n}  \frac{|\phi| ^2}{|x|^{4s+2}}\,dx }} \,\, \colon \phi \in C^{\infty}_0(B^n,\R^{n+1})\right \} \,.
\end{split}
 \end{equation}
\end{theorem}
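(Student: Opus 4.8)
The plan is to reduce both inequalities to a one–dimensional, scale–invariant spectral problem and then read off the sharp constant by Fourier analysis in the logarithmic variable. Throughout I work with scalar $\phi$: since $|\Delta^s\phi|^2=\sum_{a=1}^{n+1}|\Delta^s\phi^a|^2$, and likewise for $|\nabla\Delta^s\phi|^2$ and $|\phi|^2$, the vector–valued statement follows component by component. By density it suffices to treat $\phi\in C_0^\infty(B^n)$, and since $n\geq 2k+1>2k$ a point has zero $W^{k,2}$–capacity, so $C_0^\infty(B^n\setminus\{O\})$ is dense in $W^{k,2}_0(B^n,\R)$; hence I may assume $\phi$ is smooth and compactly supported in $B^n\setminus\{O\}$, which removes all boundary contributions both at the origin and at $\partial B^n$ in the integrations by parts below.

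First I would decompose $\phi$ into spherical harmonics, $\phi(r\omega)=\sum_{\ell\geq 0}\sum_{m}f_{\ell m}(r)\,Y_{\ell m}(\omega)$, where $-\Delta_{\s^{n-1}}Y_{\ell m}=\ell(\ell+n-2)Y_{\ell m}$ and the $Y_{\ell m}$ are $L^2(\s^{n-1})$–orthonormal. On each mode the Laplacian acts radially through
\begin{equation*}
L_\ell f=f''+\frac{n-1}{r}f'-\frac{\ell(\ell+n-2)}{r^2}f,
\end{equation*}
and, by orthogonality of the $Y_{\ell m}$, both members of \eqref{Hardy-ineq-2k} and \eqref{Hardy-ineq-2k+1} split as sums over $(\ell,m)$. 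Thus it is enough to prove, for each fixed $\ell$, the corresponding one–dimensional weighted inequality with a constant $c_\ell$ independent of $m$, and then to check that the infimum over $\ell$ is attained at $\ell=0$; the common value will be $\alpha_k(n)$.

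The heart of the matter is the radial spectral computation, which I would carry out with the Emden–Fowler / Mellin transform. Using $\Delta(r^{-\gamma})=\gamma(\gamma+2-n)r^{-\gamma-2}$, or more generally $L_\ell(r^{-\gamma})=(\gamma+\ell)\bigl(\gamma-(n-2)-\ell\bigr)r^{-\gamma-2}$, the operator $r^{2s}L_\ell^s$ is scale invariant and therefore, in the variable $t=\log r$ (equivalently under the Mellin transform), acts as a Fourier multiplier. The weight on the right–hand side fixes the borderline square–integrable exponent $\gamma^*=(n-4s)/2$ in the even case and $\gamma^*=(n-4s-2)/2$ in the odd case; writing $\gamma=\gamma^*+i\tau$ and invoking Plancherel, each inequality reduces to the statement that the modulus squared of the symbol, normalized by the weight, is bounded below by its minimum over $\tau\in\R$. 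A direct evaluation (combining $L_\ell^s(r^{-\gamma})$ with the gradient factor $|\nabla r^{-\beta}|^2=\beta^2 r^{-2\beta-2}$ in the odd case) shows that this modulus squared factors as
\begin{equation*}
\prod_{m=1}^{s}\Bigl[\bigl(\tfrac{n-4m+2\ell}{2}\bigr)^2+\tau^2\Bigr]\Bigl[\bigl(\tfrac{n+4m-4+2\ell}{2}\bigr)^2+\tau^2\Bigr]
\end{equation*}
in the even case, with an analogous product times the extra factor $\bigl(\tfrac{n-2}{2}\bigr)^2+\tau^2+\ell(\ell+n-2)$ in the odd case. Each factor is manifestly minimized at $\tau=0$, and evaluating there reproduces exactly $\alpha_{2s}(n)$, resp.\ $\alpha_{2s+1}(n)$, as defined in \eqref{def-alpha-k}.

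Two points then remain. For the monotonicity in $\ell$: since $n\geq 2k+1$ forces $n-4m>0$ in the even case and $n-4m-2>0$ in the odd case for all $m\leq s$, every factor above is positive and strictly increasing in $\ell$, so $\ell=0$ indeed yields the smallest constant; this is exactly where the hypothesis $n\geq 2k+1$ enters and is what makes the radial mode optimal. For the sharpness assertion \eqref{Hardy-best-constant}, I would exhibit a minimizing sequence concentrating at $\tau=0$, i.e.\ radial test functions asymptotic to $r^{-\gamma^*}$, suitably truncated near the origin and near $\partial B^n$ and mollified into $C_0^\infty$; a routine estimate shows that the cut–offs contribute only lower–order terms and that the Rayleigh quotient tends to $\alpha_k(n)$. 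The main technical obstacle is precisely this truncation analysis, together with the rigorous justification of the Mellin–Plancherel diagonalization for non–radial $\phi$ (the interchange of the mode sum with the integrals and the convergence of the decomposition in $W^{k,2}$); the algebraic identities for the symbol, by contrast, are mechanical once the factorization of $L_\ell(r^{-\gamma})$ is in hand. Since the result is classical, one may alternatively simply invoke \cite{MR2215561}.
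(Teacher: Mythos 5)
The paper does not actually prove this theorem: it is recalled as a known result and attributed to Galaktionov \cite{MR2215561}, so there is no internal proof to compare against. Your sketch supplies a genuine proof along the standard route (spherical harmonics plus the Emden--Fowler/Mellin diagonalization), and the computations check out: with the convention $\Delta(r^{-\gamma})=\gamma(\gamma+2-n)r^{-\gamma-2}$, iterating $L_\ell$ from the borderline exponent $\gamma^*=(n-4s)/2$ (resp.\ $(n-4s-2)/2$) produces exactly the factors $\bigl(\tfrac{n-4m+2\ell}{2}\bigr)^2+\tau^2$ and $\bigl(\tfrac{n+4m-4+2\ell}{2}\bigr)^2+\tau^2$ you list, and at $\ell=0$, $\tau=0$ these reproduce $\alpha_{2s}(n)$ and $\alpha_{2s+1}(n)$ as in \eqref{def-alpha-k}. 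You also correctly locate the role of the hypothesis $n\geq 2k+1$: it forces $n-4m>0$ (resp.\ $n-4m-2>0$) for all $m\leq s$, which is what makes every factor increasing in $\ell$ and minimized at $\tau=0$; this step is not cosmetic, since for smaller $n$ the infimum can migrate to higher modes (the classical Rellich anomaly) and the stated constant would be wrong. What your argument buys over the paper's citation is a self-contained derivation tailored to the exact normalization of $\alpha_k(n)$ used here; what it leaves as ``routine'' is real but standard work: the zero $W^{k,2}$-capacity of a point for $n>2k$ (justifying test functions supported away from the origin), the convergence of the mode decomposition in $W^{k,2}$ and the interchange of sums and integrals, and the truncation estimates for the minimizing sequence $r^{-\gamma^*}$ establishing \eqref{Hardy-best-constant}. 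None of these is a gap in substance, and falling back on \cite{MR2215561} for them, as you note, is exactly what the paper does.
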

Finally, we are now in the right position to prove our main result.
\begin{proof}[\textbf{Proof of Theorem~\ref{Th-equator-map-k>4}}] First, we prove the statement (i). To this purpose, according to Lemma\link\ref{prop-stable-imply-minimising}, it suffices to show that $u^*$ verifies the stability inequalities \eqref{stab-ineq-2k-bis}, \eqref{stab-ineq-2k+1-bis}.

Using \eqref{formule-lemma-secondo} and \eqref{Delta-j-equator-explicit}, we compute the relevant terms on $B^n\setminus\{O\}$:
\begin{equation}\label{terms-stab-in-u*-k}
\begin{split}
\Delta^{2s}u^* . u^*&=\frac{A_{2s}(n)}{r^{4s}} \\
\Delta^{2s+1}u^* . u^*&=\frac{A_{2s+1}(n)}{r^{4s+2}} \,.
\end{split}
\end{equation}

Now, using \eqref{Hardy-ineq-2k} and \eqref{terms-stab-in-u*-k} we obtain:
\begin{eqnarray*}
\int_{B^n}  | \Delta^s \phi| ^2 -  \left [ \Delta^{2s}u^* .u^*\right] |\phi|^2\, dx 
 &\geq& \int_{B^n} \Big ( \alpha_{2s}(n)- A_{2s}(n) \Big )\, \frac{|\phi|^2}{r^{4s}}\, dx\\
 &=& \int_{B^n}  \, \mathcal{P}_{2s}(n)\, \, \frac{|\phi|^2}{r^{4s}}\, dx \,.
\end{eqnarray*}
Now, since $\mathcal{P}_{2s}(n)\geq 0$ by assumption, the stability inequality \eqref{stab-ineq-2k-bis} holds and then we can apply Lemma\link\ref{prop-stable-imply-minimising} to conclude that $u^*$ is minimizing, so ending the proof of (i) in the case that $k=2s$ is even. Similarly, in the case of the odd extrinsic energies we have:
\begin{eqnarray*}
\int_{B^n}  | \nabla \Delta^s \phi| ^2 + \left [ \Delta^{2s+1}u^* .u^*\right] |\phi|^2\, dx 
 &\geq& \int_{B^n} \Big ( \alpha_{2s+1}(n)+ A_{2s+1}(n) \Big )\, \frac{|\phi|^2}{r^{4s+2}}\, dx\\
 &=& \int_{B^n}  \, \mathcal{P}_{2s+1}(n)\, \, \frac{|\phi|^2}{r^{4s+2}}\, dx \, \geq 0
\end{eqnarray*}
and so the conclusion of (i) is immediate.

(ii) Now, we have to prove that $u^*:B^n \to \s^n$ is unstable for the extrinsic $k$-energy when $\mathcal{P}_k(n)<0$. We start with the case that $k=2s$ and choose a small $\varepsilon>0$ such that 
\begin{equation}\label{epsilon>0}
\mathcal{P}_{2s}(n)+\varepsilon <0 \,.
\end{equation}
Next, we deduce from \eqref{Hardy-best-constant} that there exists a not identically zero $\phi_\varepsilon \in C^{\infty}_0(B^n,\R^{n+1})$ such that
\begin{equation}\label{phi-epsilon}
  \frac{\displaystyle{\int_{B^n}  | \Delta^s \phi_\varepsilon| ^2\,dx }}{\displaystyle{\int_{B^n}  \frac{|\phi_\varepsilon| ^2}{|x|^{4s}}\,dx }} \leq \alpha_{2s}(n)+\varepsilon\,.
\end{equation}

We apply \eqref{terms-stab-in-u*-k} and \eqref{phi-epsilon} in the left side of the stability inequality \eqref{stab-ineq-2k}:
\begin{eqnarray*}
\int_{B^n}  | \Delta^s \phi_\varepsilon| ^2 -  \left [ \Delta^{2s}u^* .u^*\right] |\phi_\varepsilon|^2\, dx 
 &\leq& \int_{B^n} \Big ( \alpha_{2s}(n)+\varepsilon- A_{2s}(n) \Big )\, \frac{|\phi_\varepsilon|^2}{r^{4s}}\, dx\\
 &=& \int_{B^n}  \, \Big (\mathcal{P}_{2s}(n)+\varepsilon\Big ) \, \frac{|\phi_\varepsilon|^2}{r^{4s}}\, dx \,<\,0
\end{eqnarray*}
by \eqref{epsilon>0}. The case $k=2s+1$ is analogous and so the proof of Theorem\link\ref{Th-equator-map-k>4} is ended.
\end{proof}
\begin{remark}\label{Rem-phi-epsilon}
We point out that, in the case of the extrinsic $2$-energy, a function $\phi_\varepsilon\in W^{2,2}_0(B^n,\R^{n+1})$ as in \eqref{phi-epsilon} was constructed explicitly by Hong and Thompson in their proof of Theorem\link\ref{Th-equator-map-Hong} (see \cite{MR2322746}).
\end{remark}

\begin{proof}[Proof of Lemma\link\ref{lemma-Pj(n)}] We give the details in the case that $k=2s$. In order to prove \eqref{Pj(2j+1)<0} we observe that, carefully rearranging the various factors, we have:
\begin{equation}\label{eq-lemma-tecnico-1}
 \prod_{i=1}^s (4s+1-4i)^2 (4s+4i-3)^2 =\prod_{i=0}^{2s-1}(1+4i)^2 \,.
 \end{equation} 
 Indeed,
 \begin{eqnarray*}
 \prod_{i=1}^s (4s+1-4i)^2 (4s+4i-3)^2&=&\prod_{i=1}^s (4s+1-4i)^2 \,\,\prod_{i=1}^s(4s+4i-3)^2 \\
 &=&\prod_{i=0}^{s-1}(1+4i)^2 \,\,\prod_{i=s}^{2s-1}(1+4i)^2=\prod_{i=0}^{2s-1}(1+4i)^2 \,.
 \end{eqnarray*}
Now we observe that, according to \eqref{def-alpha-k}, 
\[
\alpha_{2s}(4s+1)= \frac{1}{16^s}\,\prod_{i=1}^s (4s+1-4i)^2 (4s+4i-3)^2 \,.
\]
Thus, using \eqref{eq-lemma-tecnico-1}, we deduce that
\begin{equation}\label{eq-lemma-tecnico-2}
 \alpha_{2s}(4s+1) = \frac{1}{16^s}\,\prod_{i=0}^{2s-1}(1+4i)^2 =\prod_{i=0}^{2s-1}\left (4i^2+2i+ \frac{1}{4}\right )\,.
 \end{equation}
Next, we start from \eqref{def-A-j-n} and proceed in a similar fashion:
\begin{eqnarray}\label{eq-lemma-tecnico-3}\nonumber
 A_{2s}(4s+1)&=&\prod_{i=1}^{2s} (4s+2-2i)(2i-1) \\
 &=&\prod_{i=0}^{2s-1}(4s-2i) \,\,\prod_{i=0}^{2s-1}(2i+1)\\\nonumber
 &=&\prod_{i=0}^{2s-1}(2+2i) \,\,\prod_{i=0}^{2s-1}(2i+1)=\prod_{i=0}^{2s-1}(4i^2+6i+2) \,.
 \end{eqnarray}
Finally, comparing \eqref{eq-lemma-tecnico-2} and \eqref{eq-lemma-tecnico-3}, we conclude immediately that
 \[
 \mathcal{P}_{2s}(4s+1)=\alpha_{2s}(4s+1)- A_{2s}(4s+1)< 0 
 \]
 for all $ s \geq1$. 
 
Next, we prove the existence of $n_k^*$. First, we observe that the highest order coefficient of $\mathcal{P}_{2s}(n)$ is positive and so $\lim_{n \to +\infty}\mathcal{P}_{2s}(n)=+\infty$. Therefore, because of \eqref{Pj(2j+1)<0}, we deduce that there exists an integer $N> 4s+1$ such that $\mathcal{P}_{2s}(N)>0$. It follows, by an obvious induction argument, that the proof is completed if we show that the following statement is true:
\begin{equation}\label{eq-induction}
{\rm If} \,\,N> 4s+1\,\,{\rm and} \,\,\mathcal{P}_{2s}(N)\geq 0\,, \,{\rm then} \,\,\mathcal{P}_{2s}(N+1)> 0\,.
\end{equation}
To this purpose, in a similar fashion to \eqref{eq-lemma-tecnico-1}, we write
\begin{eqnarray}\label{eq-proof-eq-induction-1}\nonumber
\alpha_{2s}(N)&=& 16^{-s}\,\,  \prod_{i=0}^{2s-1}(N-4s+4i)^2\\\nonumber
\alpha_{2s}(N+1)&=& 16^{-s}\,\, \prod_{i=0}^{2s-1}(N+1-4s+4i)^2\\
&=& 16^{-s}\,\, \prod_{i=0}^{2s-1}(N-4s+4i)^2 \,\gamma_i \\\nonumber
&=& \alpha_{2s}(N)\,\, \prod_{i=0}^{2s-1} \gamma_i \,,
\end{eqnarray}
where we have set
\begin{equation*}\label{def-gamma-i}
\gamma_i=\frac{(N+1-4s+4i)^2}{(N-4s+4i)^2}\,.
\end{equation*}
Similarly, we have:
\begin{eqnarray}\label{eq-proof-eq-induction-2}\nonumber
A_{2s}(N)&=&\prod_{i=0}^{2s-1}\, (2i+1)\,\, \prod_{i=0}^{2s-1}(N-1-2i)=\prod_{i=0}^{2s-1}\, (2i+1)\,\,  \prod_{i=0}^{2s-1}(N-4s+1+2i)\\\nonumber
A_{2s}(N+1)&=&\prod_{i=0}^{2s-1}\, (2i+1)\,\,  \prod_{i=0}^{2s-1}(N-4s+2+2i)\\\nonumber
&=& \prod_{i=0}^{2s-1}\, (2i+1)\,\,  \prod_{i=0}^{2s-1}(N-4s+1+2i) \,\beta_i \\
&=& A_{2s}(N)\, \, \prod_{i=0}^{2s-1} \,\beta_i \,,
\end{eqnarray}
where we have set
\begin{equation*}\label{def-beta-i}
\beta_i=\frac{(N-4s+2+2i)}{(N-4s+1+2i)}\,.
\end{equation*}
We point out that, since $N>4s+1$, all the numerators and denominators in $\gamma_i, \beta_i$ are positive. Moreover, it is easy to check that 
\[
\gamma_i > \beta_i \qquad \quad  (0 \leq i \leq 2s-1)\,.
\]
In fact, since $N>4s+1$ putting $N=4s+1+a$ with $a>0$, the quantity $\gamma_i - \beta_i$ becomes
\[
\frac{a^2+(5+4 i) a+ 14 i +5}{(a + 2 i + 2) (a + 4 i + 1)^2}
\]
which is positive for any $i\geq 0$ and any $a>0$.
Thus
\begin{equation}\label{eq-proof-eq-induction-3}
\prod_{i=0}^{2s-1} \gamma_i \,-\, \prod_{i=0}^{2s-1} \beta_i >0 \,.
\end{equation}
Now, using \eqref{eq-proof-eq-induction-1}, \eqref{eq-proof-eq-induction-2} and \eqref{eq-proof-eq-induction-3} it is easy to end the proof of \eqref{eq-induction}. Indeed, since $\alpha_{2s}(N)$ and $A_{2s}(N)$ are positive,
\begin{eqnarray*}
\mathcal{P}_{2s}(N+1)&=&\alpha_{2s}(N+1)-A_{2s}(N+1)\\
&=&  \alpha_{2s}(N)\, \prod_{i=0}^{2s-1} \gamma_i-A_{2s}(N)\, \prod_{i=0}^{2s-1}\beta_i\\
&=& \Big ( \alpha_{2s}(N)-A_{2s}(N)\Big )\, \prod_{i=0}^{2s-1} \gamma_i \,+A_{2s}(N)\,\Big ( \prod_{i=0}^{2s-1} \gamma_i-\prod_{i=0}^{2s-1}\beta_i \Big )\\
&=& \mathcal{P}_{2s}(N)\,\prod_{i=0}^{2s-1} \gamma_i \,+A_{2s}(N)\,\Big ( \prod_{i=0}^{2s-1} \gamma_i-\prod_{i=0}^{2s-1}\beta_i \Big)\\
&>&0\,.
\end{eqnarray*}
The case that $k=2s+1$ can be handled with the same method and so we omit further details.
\end{proof}
\begin{proof}[Proof of Proposition\link\ref{Th-E-L-equation-k-energy-extr}] If we considered $u$ as a map into $\R^{n+1}$ the Euler-Lagrange equation of the functional $E_{k}^{{\rm ext}}(u)$ would simply be $\Delta^k u=0$. Therefore, when we regard $u$ as a map into $\s^n$, we conclude that $u$ is a critical point of $E_{k}^{{\rm ext}}(u)$ provided that $\Delta^k u \perp T_u\s^n$ in the sense of distributions (see also \cite[Proposition~2.1]{MR2054520}), i.e., if \eqref{Euler-Lag-equation-k-energy-ext} holds. So, it only remains to determine the Lagrange multiplier $\lambda_k$. In the case $k=2$, the computation of $\lambda_2$ was achieved in \cite[proof of Proposition 1.1]{MR1692148} using the Green identity and integration by parts in a single step. Here we have to repeat the same process $k-1$ times. Let us describe this in detail. 

\textbf{Step 1:}

First, we inner product both sides of equation \eqref{Euler-Lag-equation-k-energy-ext} with $u$, we multiply by a compactly supported test function $\varphi \in C^{\infty}_0(M)$ and integrate over $M$. To simplify notation, we omit to write the volume element $dv_g$. Using the Green identity and integrating by parts we have
\begin{eqnarray*}\label{computation-lambdak}\nonumber
 - \int_M \lambda_k\, \varphi &=& \int_M \langle \Delta^k u, u \rangle \varphi=\int_M \langle \Delta^{k-1} u, \Delta(u \varphi)\rangle \\\nonumber
 &=&\int_M \langle \Delta^{k-1}u, \Delta u \rangle \varphi+ 2\langle \Delta^{k-1} u, \nabla u . \nabla \varphi \rangle+\langle \Delta^{k-1} u, u \rangle \Delta \varphi\\
  &=&\int_M \langle \Delta^{k-1} u, \Delta u \rangle \varphi- 2\langle  \Delta^{k-1} u, \Delta u  \rangle \varphi- 2\left ( \nabla \Delta^{k-1} u . \nabla u  \right ) \varphi+\langle \Delta^{k-1} u, u \rangle \Delta \varphi\\\nonumber
  &=&\int_M -\Big [ \langle  \Delta^{k-1} u, \Delta u  \rangle + 2\left ( \nabla \Delta^{k-1} u . \nabla u  \right ) \Big ] \varphi+\int_M \langle \Delta^{k-1} u, u \rangle \Delta \varphi\nonumber
\end{eqnarray*}
(note that we write $\nabla u . \nabla \varphi$ to denote the obvious version of \eqref{def-scalar-prod} which yields as a result the $(n+1)$ vector whose $j$-entry is $\langle \nabla u^j, \nabla \varphi \rangle$).

Now, the two terms within the brackets $[\,\,]$ have the correct form. Indeed, they are the terms corresponding to $j=0$ in the two sums which define $\lambda_k$ in \eqref{lambda-explicit-general}. Then we can start Step 2 to deal with the remaining term $\int_M \langle \Delta^{k-1} u, u \rangle \Delta \varphi$.

\textbf{Step 2:}
\begin{eqnarray*}\label{computation-lambdak-bis}\nonumber
\int_M \langle \Delta^{k-1} u, u \rangle \Delta \varphi &=& \int_M \langle \Delta^{k-2} u, \Delta(u \Delta \varphi)\rangle \\\nonumber
 &=&\int_M \langle \Delta^{k-2}u, \Delta u \rangle \Delta\varphi+ 2\langle \Delta^{k-2} u, \nabla u . \nabla \Delta\varphi \rangle+\langle \Delta^{k-2} u, u \rangle \Delta^2 \varphi\\\nonumber
  &=&\int_M \langle \Delta^{k-2} u, \Delta u \rangle \Delta\varphi- 2\langle  \Delta^{k-2} u, \Delta u  \rangle \Delta\varphi- 2\left ( \nabla \Delta^{k-2} u . \nabla u  \right )\Delta \varphi+\langle \Delta^{k-2} u, u \rangle \Delta^2 \varphi\\
  &=&\int_M -\Big [ \langle  \Delta^{k-2} u, \Delta u  \rangle + 2\left ( \nabla \Delta^{k-2} u . \nabla u  \right ) \Big ] \Delta\varphi+\int_M \langle \Delta^{k-2} u, u \rangle \Delta^2 \varphi\\\nonumber
  &=&\int_M -\Big [ \Delta\langle  \Delta^{k-2} u, \Delta u  \rangle + 2\Delta\left ( \nabla \Delta^{k-2} u . \nabla u  \right ) \Big ] \varphi+\int_M \langle \Delta^{k-2} u, u \rangle \Delta^2 \varphi\nonumber\,.
\end{eqnarray*}
Now, the two terms within the brackets $[\,\,]$ have the correct form. Indeed, they correspond to $j=1$ in the two sums which define $\lambda_k$ in \eqref{lambda-explicit-general}. Of course, now the remaining term $\int_M \langle \Delta^{k-2} u, u \rangle \Delta^2 \varphi$ shall be treated analogously in Step 3. 

By way of summary, after $k-2$ steps of this type we recover all the terms with $j=0,\ldots, k-3$ associated to the two sums in \eqref{lambda-explicit-general}, and we are left with the term $\int_M \langle \Delta^2 u, u \rangle \Delta^{k-2} \varphi$.

\textbf{Step $k-1$:}
\begin{eqnarray*}
 \int_M \langle \Delta^2u, u \rangle \Delta^{k-2} \varphi&=& \int_M \langle \Delta u, \Delta(u \Delta^{k-2}\varphi)\rangle \\
 &=&\int_M \langle \Delta u, \Delta u \rangle \Delta^{k-2}\varphi+ 2\langle \Delta u, \nabla u . \nabla \Delta^{k-2}\varphi \rangle+\langle \Delta u, u \rangle \Delta^{k-1} \varphi\\
  &=&\int_M \big [ -\langle  \Delta u, \Delta u  \rangle - 2\left ( \nabla \Delta u . \nabla u  \right ) \big ] \Delta^{k-2} \varphi-|\nabla u|^2 \Delta^{k-1} \varphi\\
  &=&\int_M -\Big [ \Delta^{k-2} \left ( \langle  \Delta u, \Delta u  \rangle\right ) + 2 \Delta^{k-2} \left (  \nabla \Delta u . \nabla u  \right ) + \Delta^{k-1} \left ( |\nabla u|^2 \right ) \Big ] \varphi\,,
\end{eqnarray*}
where, for the third equality, we have used $\langle \Delta u, u \rangle=-|\nabla u|^2$. Now, the first two terms correspond to $j=k-2$ in the two sums in \eqref{lambda-explicit-general}, while the third term coincides with the first one in \eqref{lambda-explicit-general}. Therefore, the proof of Proposition\link\ref{Th-E-L-equation-k-energy-extr} is completed.
\end{proof}

\section{Appendix}\label{Appendix}

In the following \textit{Wolfram Mathematica}${}^{ \text{\textregistered}} $ code  {\tt P2[s,n]} and  {\tt a2[s,n]} represent $\mathcal{P}_{2s}(n)$ and $\alpha_{2s}(n)$ respectively. 
Evaluating the following cell\vspace{3mm}

\begin{lstlisting}[language=Mathematica,caption={}]
a2[s_,n_] :=  Product[((1/4) (n - 4 j) (n + 4 j - 4))^2, {j, 1, s}];
A[k_,n_]:=(-1)^k Product[(n - 2 i + 1) (2 i - 1), {i, 1, k}];
P2[s_,n_]:=a2[s,n] - A[2s,n];
Do[{i := 4 s + 1; 
Do[If[P2[s, n] < 0, i++, i] , {n, (4 s + 1), 4 (2 s + 1)}], Print["n*"[2 s], "=" [i] ]}, {s, 1000, 1003}]
\end{lstlisting}
\vspace{3mm}

yields the values of $n_{2s}^*$ for $1000\leq s \leq 1003$ and the output is:\vspace{3mm}

\begin{lstlisting}[language=Mathematica,caption={}]
n*[2000]= [4019] 
n*[2002]= [4023] 
n*[2004]= [4027] 
n*[2006]= [4031] 
  \end{lstlisting}
A similar code computes $n_{2s+1}^*$.

\end{document}